\numberwithin{equation}{section} 
\theoremstyle{plain}
\newtheorem{theorem}{Theorem}
\newtheorem{lemma}{Lemma}
\newtheorem{proposition}{Proposition}
\theoremstyle{definition}
\begin{document}

\title{On sums of powers of almost equal primes}
\author{Angel Kumchev}
\address{Department of Mathematics, Towson University, Towson, MD 21252, USA}
\email{akumchev@towson.edu}
\author{Huafeng Liu}
\address{School of Mathematics, Shandong University, Jinan, Shandong 250100, China}
\email{hfliu\_sdu@hotmail.com}
\date{\today}

\subjclass[2010]{11P32; 11L20, 11N36, 11P05.}
\begin{abstract}
Let $k \ge 2$ and $s$ be positive integers, and let $n$ be a large positive integer subject to certain local conditions. We prove that if $s \ge k^2+k+1$ and $\theta > 31/40$, then $n$ can be expressed as a sum $p_1^k + \dots + p_s^k$, where $p_1, \dots, p_s$ are primes with $|p_j - (n/s)^{1/k}| \le n^{\theta/k}$. This improves on earlier work by Wei and Wooley \cite{wei2014sums} and by Huang \cite{huang2015exponential} who proved similar theorems when $\theta > 19/24$.
\end{abstract}
\keywords{}

\maketitle

\section{Introduction} 
\label{sec:introduction}

The study of additive representations of integers as sums of powers of primes goes back to the work of Hua \cite{hua1938, hua1965}. In particular, Hua proved that when $k$ and $s$ are positive integers with $s>2^k$, every sufficiently large natural number $n$ satisfying certain local solubility conditions can be represented as 
\begin{equation}\label{eq1.1}
  n=p_1^k+\cdots+p_s^k,
\end{equation} 
where $p_1, \dots, p_s$ are prime numbers. (Henceforth, the letter $p$, with or without subscripts, always denotes a prime number.) To describe the local conditions, we let $\tau = \tau(k,p)$ be the largest integer with $p^\tau \mid k$, and then define 
\[ 
K(k) = \prod_{(p-1) \mid k} p^{\gamma(k,p)}, \qquad
\gamma(k,p) = \begin{cases}
  \tau(k,p) + 2 & \text{when } p = 2, \tau > 0, \\
  \tau(k,p) + 1 &\text{otherwise}.
\end{cases} \]
One typically studies \eqref{eq1.1} for $n$ restricted to the congruence class 
\[ \mathcal{H}_{k,s}= \big\{n\in\mathbb{N} : n\equiv s \pmod {K(k)} \big\}. \]

In this paper, we are interested in the additive representations of the form \eqref{eq1.1} with ``almost equal'' primes. Given a large integer $n\in\mathcal{H}_{k,s}$, we ask whether it is possible to solve \eqref{eq1.1} in primes subject to
\begin{equation}\label{eq1.2}
\big| p_j-(n/s)^{1/k} \big|\leq H \qquad (1 \le j \le s), 
\end{equation}
where $H=o(n^{1/k})$. There is a long list of results on sums of five or fewer almost equal squares ($k = 2$, $3 \le s \le 5$), beginning with the work of Liu and Zhan \cite{liuzhan1996} and culminating with the results of Kumchev and Li \cite{kumchev2012li} (see \cite{kumchev2012li} for a detailed history of that problem). In particular, Kumchev and Li showed that when $k=2$ and $s=5$ the problem has solutions with $H = n^{\theta/2}$ for any fixed $\theta > 8/9$. They were also the first to obtain results on sums of more than five almost equal squares, where the extra variables are used to reduce the admissible size of $H$. Let $\theta_{k,s}$ denote the least exponent $\theta$ such that \eqref{eq1.1} and \eqref{eq1.2} with $H = n^{\theta/k}$ can be solved for sufficienly large $n \in \mathcal H_{k,s}$ whenever $\theta > \theta_{k,s}$. Kumchev and Li \cite{kumchev2012li} proved that $\theta_{2,s} \le 19/24$ when $s \ge 17$. The lower bound on $s$ in this theorem was reduced to $s \ge 7$ in a recent paper by Wei and Wooley~\cite{wei2014sums}, in which those authors also established surprisingly strong results for higher values of $k$: they proved that if $s > 2k(k-1)$, one has
\begin{equation}\label{weiwooley}
\theta_{k,s} \le \begin{cases}
  4/5   & \text{if } k=3, \\
  5/6   & \text{if } k \ge 4.
\end{cases}
\end{equation}
Huang \cite{huang2015exponential} further reduced the bound \eqref{weiwooley} to $\theta_{k,s} \le 19/24$ for all $k \ge 3$ and $s > 2k(k-1)$. 

The main goal of the present work is to establish the bound $\theta_{k,s} \le 31/40$ for all $k \ge 2$. We also make use of a recent breakthrough by Bourgain, Demeter and Guth \cite{bourgain2015VMT} to reduce the lower bound on $s$ when $k \ge 4$. Our main result is as follows.

\begin{theorem}\label{main theorem}
Let $k \ge 2$, $s \ge k^2+k+1$, and $\theta > 31/40$. When $n\in \mathcal{H}_{k,s}$ is sufficiently large, equation \eqref{eq1.1} has solutions in primes $p_1, \dots, p_s$ satisfying \eqref{eq1.2} with $H=n^{\theta/k}$.
\end{theorem}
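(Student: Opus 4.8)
I would attack Theorem~\ref{main theorem} by the Hardy--Littlewood circle method, applied to a generating function localised to primes in the short interval $\mathcal I=(X-H,X+H]$, where $X=(n/s)^{1/k}$ and $H=n^{\theta/k}\asymp X^\theta$. Writing
\[
 f(\alpha)=\sum_{p\in\mathcal I}(\log p)\,e(\alpha p^k),
\]
the weighted number of solutions of \eqref{eq1.1} subject to \eqref{eq1.2} is $R(n)=\int_0^1 f(\alpha)^s e(-n\alpha)\,d\alpha$, so it suffices to establish the lower bound $R(n)\gg H^{s-1}X^{1-k}$, which is positive (and, since $\theta(s-1)>k-1$ under our hypotheses, genuinely exceeds the weight of any single representation). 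Set $P=X^{k-1}H$. The plan is to dissect the unit interval into major arcs $\mathfrak M$, the union over coprime pairs $a,q$ with $q\le Q=X^{\eta}$ ($\eta>0$ small and fixed) of the intervals $|\alpha-a/q|\le Q(qP)^{-1}$, and the complementary minor arcs $\mathfrak m$; an intermediate pruning step would later shrink $Q$ to a power of $\log X$ once the main term has been isolated.

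On the major arcs the analysis is essentially classical: one approximates $f(a/q+\beta)$ by $\phi(q)^{-1}S_k(q,a)\,v(\beta)$ with $S_k(q,a)=\sum_{r\bmod q}^{*}e(ar^k/q)$ and $v(\beta)=\sum_{m\in\mathcal I}e(\beta m^k)$, controlling the error by the prime number theorem in short arithmetic progressions (Siegel--Walfisz for small moduli, zero-density estimates or pruning for the rest). Substituting and summing yields
\[
 \int_{\mathfrak M}f(\alpha)^s e(-n\alpha)\,d\alpha=\mathfrak S(n)\,\mathfrak J(n)+o\bigl(H^{s-1}X^{1-k}(\log X)^{-s}\bigr),
\]
where $\mathfrak S(n)$ is the singular series and $\mathfrak J(n)\asymp H^{s-1}X^{1-k}$ the singular integral; the condition $n\in\mathcal H_{k,s}$ together with $s\ge k^2+k+1$ guarantees $\mathfrak S(n)\gg1$, while $s\ge3$ gives the stated size of $\mathfrak J(n)$.

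The crux is the minor-arc estimate $\int_{\mathfrak m}|f(\alpha)|^s\,d\alpha=o\bigl(H^{s-1}X^{1-k}(\log X)^{-s}\bigr)$, which I would obtain by interlacing two ingredients over a dyadic decomposition of $\mathfrak m$ by the size of the Dirichlet denominator $q$. The first is a mean value: by positivity $\int_0^1|f|^{k^2+k}\,d\alpha$ is at most $(\log X)^{k^2+k}$ times the number of $m_1,\dots,m_{2s_0}\in\mathcal I$ with $m_1^k+\dots+m_{s_0}^k=m_{s_0+1}^k+\dots+m_{2s_0}^k$, where $s_0=\tfrac12 k(k+1)$ is the critical exponent in Vinogradov's mean value theorem, and a short-interval form of the (now resolved) main conjecture for that theorem---drawing on Bourgain, Demeter and Guth \cite{bourgain2015VMT} when $k\ge4$, Wooley's cubic case when $k=3$, and elementary estimates when $k=2$---controls this count. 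It is the need to dedicate $2s_0=k^2+k$ of the $s$ variables to this step, leaving at least one in reserve, that forces $s\ge k^2+k+1$; and for $k\ge4$ it is here that \cite{bourgain2015VMT} permits $s$ to fall below the threshold $2k(k-1)$ required in \cite{wei2014sums}. The second ingredient is a pointwise Weyl-type bound $|f(\alpha)|\ll H^{1-\delta}$ on $\mathfrak m$, with $\delta=\delta(k,\theta)>0$ (sharper on the arcs with larger $q$): via Vaughan's identity---or Heath-Brown's identity when $k$ is large---one reduces $f(\alpha)$ to $O((\log X)^{O(1)})$ bilinear sums of Type~I and Type~II over the range $mn\in\mathcal I$, estimates the Type~I sums $\sum_m\xi_m\sum_{X/m<n\le(X+2H)/m}e(\alpha m^k n^k)$ via van der Corput's $k$-th derivative test for incomplete Weyl sums in intervals of length $\asymp H/m$, and the Type~II sums by Cauchy--Schwarz, differencing, and van der Corput's method (small $k$) or Vinogradov's method (large $k$). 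Balancing the division points in Vaughan's identity against the ranges of $q$ and $H$, one finds that a positive saving survives precisely when $\theta>31/40$, uniformly in $k\ge2$.

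I expect the exponential-sum bookkeeping in this last step to be the main obstacle: the Type~I sums worsen as $m$ grows, since the inner sum then runs over the shorter interval $(X/m,(X+2H)/m]$, so the admissible ranges in Vaughan's (or Heath-Brown's) identity must be weighed carefully against $q$ and $H$, and the exponent $31/40$ is the exact outcome of that optimisation---already the binding constraint when $k=2$, with Vinogradov's method yielding at least as much when $k\ge3$. Granting both ingredients, H\"older's inequality across the dyadic pieces of $\mathfrak m$---the mean value on $k^2+k$ of the factors, the pointwise bound on the remainder---delivers the minor-arc estimate, whence
\[
 R(n)=\mathfrak S(n)\,\mathfrak J(n)+o\bigl(H^{s-1}X^{1-k}(\log X)^{-s}\bigr)\gg H^{s-1}X^{1-k}>0,
\]
completing the proof of Theorem~\ref{main theorem}.
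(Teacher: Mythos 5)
There is a genuine gap: your proposal works directly with the prime-detecting generating function
\[
 f(\alpha)=\sum_{p\in\mathcal I}(\log p)\,e(\alpha p^k),
\]
and proposes to handle the major arcs by the prime number theorem in short intervals of arithmetic progressions, citing Siegel--Walfisz and ``zero-density estimates or pruning.'' That is precisely the approach that yields $\theta>19/24$, not $\theta>31/40$. The sticking point is that the short-interval Siegel--Walfisz asymptotic for $\mathbf 1_{\mathbb P}$ on an interval $(x-y,x+y]$ is only available for $y\ge x^{7/12+\epsilon}$ (the Huxley exponent), and when that input is fed through the major-arc replacement/pruning argument --- which forces $Q=x^{k-2}y^2P^{-1}\ge x^{k-5/12+\epsilon}$, say --- one is left with the constraint $\theta>19/24$. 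Your minor-arc sketch (H\"older with $k^2+k$ copies to the short-interval Vinogradov mean value via Bourgain--Demeter--Guth, plus a Weyl/Vaughan-identity pointwise bound on primes) is essentially the paper's and does yield a saving down to $\theta>31/40$, but that does you no good if the major-arc analysis stalls at $19/24$.

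What actually unlocks $31/40$ is a vector sieve. The paper replaces $\mathbf 1_{\mathbb P}$ in five of the $s$ variables by Baker--Harman--Pintz sieve weights $\lambda^\pm$ with $\lambda^-\le\mathbf 1_{\mathbb P}\le\lambda^+$, so that the Br\"udern--Fouvry inequality
\[
 \mathbf 1_{\mathbb P}(m_1)\cdots\mathbf 1_{\mathbb P}(m_5)\ge \sum_{i=1}^5\lambda^-(m_i)\prod_{j\ne i}\lambda^+(m_j)-4\,\lambda^+(m_1)\cdots\lambda^+(m_5)
\]
gives $R_{k,s}(n)\ge 5R_{k,s}(n,\lambda^-)-4R_{k,s}(n,\lambda^+)$. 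The crucial gain is property (A0): the functions $\lambda^\pm$ obey a short-interval Siegel--Walfisz asymptotic already for $y\ge x^{11/20+\epsilon}$, with constants $\kappa_->0.99$, $\kappa_+<1.01$. Pushing that stronger input through the major-arc machinery moves the constraint from $Q\ge x^{k-5/12}$ to $Q\ge x^{k-9/20}$, i.e.\ from $\theta>19/24$ to $\theta>31/40$; a subsequent pruning to a thinner set of major arcs $\mathfrak M_0$ then lets the remaining $s-5$ genuine prime variables be handled by Huxley's $7/12$-result after all. Since $\kappa_-\kappa_+^4$ and $\kappa_+^5$ are both close to $1$, the vector-sieve lower bound remains positive. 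Without this sieve replacement your argument cannot get below $\theta=19/24$, which is exactly the Wei--Wooley/Huang threshold the theorem is designed to beat.
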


Circle method experts will not be surprised that our methods lead also to improvements on the results established by Wei and Wooley \cite{wei2014sums} and by Huang \cite{huang2015exponential} on solubility for ``almost all'' $n$ and on the number of exceptions for representations by six almost equal squares. Indeed, by adapting the ideas in \cite[\S 9]{wei2014sums}, we obtain the following theorems. 

\begin{theorem}
Let $k \ge 2$, $s > k(k+1)/2$, $\theta > 31/40$, and $N \to \infty$. There is a fixed $\delta > 0$ such that equation \eqref{eq1.1} has solutions in primes $p_1, \dots, p_s$ satisfying \eqref{eq1.2} with $H=n^{\theta/k}$ for all but $O(N^{1-\delta})$ integers $n \le N$ subject to $n \in \mathcal{H}_{k,s}$ (and, when $k = 3$ and $s = 7$, also $9 \nmid n$).  
\end{theorem}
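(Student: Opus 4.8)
The plan is to run the Hardy--Littlewood circle method on short intervals, following the strategy of Wei and Wooley \cite[\S 9]{wei2014sums} but feeding in the sharper minor-arc and mean-value estimates established in the course of proving Theorem \ref{main theorem}. After a dyadic splitting it suffices to bound the number of exceptional $n\in(N/2,N]$. Partition $(N/2,N]$ into $\asymp N^{(1-\theta)/k}$ subintervals $I$ of length $L\asymp N^{(\theta+k-1)/k}$. On each $I$ the quantity $(n/s)^{1/k}$ varies only by $O(LN^{1/k-1})=O(N^{\theta/k})$, so if we fix $X$ near the value of $(n/s)^{1/k}$ at the left endpoint of $I$ and put $H'=c_0N^{\theta/k}$ with $c_0$ small, then any solution of \eqref{eq1.1} with $|p_j-X|\le H'$ also satisfies \eqref{eq1.2} for the corresponding $n\in I$. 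Thus it is enough to prove that, apart from $O(|I|N^{-\delta})$ integers $n\in I$ in the relevant residue class (and with $9\nmid n$ when $(k,s)=(3,7)$), the weighted count
\[
  R(n)=\sum_{\substack{p_1^k+\cdots+p_s^k=n\\ |p_j-X|\le H'}}(\log p_1)\cdots(\log p_s)=\int_0^1 f(\alpha)^s e(-n\alpha)\,d\alpha,\qquad f(\alpha)=\sum_{|p-X|\le H'}(\log p)\,e(\alpha p^k),
\]
is positive.

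I would then dissect $[0,1]$ into major arcs $\mathfrak M$ --- intervals of radius $QX^{1-k}/H'$ about the fractions $a/q$ with $q\le Q$, for a suitable parameter $Q$ --- and minor arcs $\mathfrak m$. On $\mathfrak M$ the standard approximation yields $\int_{\mathfrak M}f(\alpha)^se(-n\alpha)\,d\alpha=\mathfrak S(n)\mathfrak J(n)+O(\text{error})$ with $\mathfrak S(n)\mathfrak J(n)\gg (H')^{s-1}X^{1-k}$ for every $n$ in the residue class: here the hypothesis $n\in\mathcal H_{k,s}$ handles the primes dividing $K(k)$, while $s>k(k+1)/2$ makes the remaining $p$-adic densities positive; the only exception is $(k,s)=(3,7)$ with $9\mid n$, when the $3$-adic density vanishes because a sum of seven cubes of integers coprime to $3$ omits the residue $0\bmod 9$ --- whence the extra hypothesis in that case. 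For the minor arcs I would apply Bessel's inequality over $n\in I$ in the form
\[
  \sum_{n\in I}\Bigl|\int_{\mathfrak m}f(\alpha)^se(-n\alpha)\,d\alpha\Bigr|^2\le\int_{\mathfrak m}|f(\alpha)|^{2s}\,d\alpha\le\Bigl(\sup_{\alpha\in\mathfrak m}|f(\alpha)|\Bigr)^{2s-k(k+1)}\int_0^1|f(\alpha)|^{k(k+1)}\,d\alpha,
\]
which is legitimate because $s>k(k+1)/2$ (with $k(k+1)/2$ an integer) forces $2s-k(k+1)\ge 2$. Into this one feeds two ingredients already present in the proof of Theorem \ref{main theorem}: a mean-value bound of the shape $\int_0^1|f(\alpha)|^{k(k+1)}\,d\alpha\ll (H')^{k(k+1)-1+\varepsilon}X^{1-k}$, the ``almost equal'' analogue of Hua's inequality, which follows from the Bourgain--Demeter--Guth decoupling theorem \cite{bourgain2015VMT}; and the minor-arc estimate $\sup_{\alpha\in\mathfrak m}|f(\alpha)|\ll (H')^{1-\mu+\varepsilon}$ for some fixed $\mu=\mu(k)>0$, which is available precisely because $\theta>31/40$. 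Together these give $\int_{\mathfrak m}|f|^{2s}\,d\alpha\ll (H')^{2s-1-\mu(2s-k(k+1))}X^{1-k}$, which is smaller by a factor $N^{-\delta_0}$ than $|I|\cdot\bigl((H')^{s-1}X^{1-k}\bigr)^2$.

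Consequently the number of $n\in I$ for which $\bigl|\int_{\mathfrak m}f(\alpha)^se(-n\alpha)\,d\alpha\bigr|\ge\tfrac12\mathfrak S(n)\mathfrak J(n)$ is $O(|I|N^{-\delta_1})$, and for every other $n\in I$ in the residue class one gets $R(n)=\mathfrak S(n)\mathfrak J(n)+o\bigl(\mathfrak S(n)\mathfrak J(n)\bigr)>0$. Summing over the $\asymp N^{(1-\theta)/k}$ subintervals of $(N/2,N]$, and then over dyadic ranges, produces $O(N^{1-\delta})$ exceptional $n\le N$, as required.

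The step I expect to be the main obstacle is the minor-arc bound $\sup_{\mathfrak m}|f|\ll (H')^{1-\mu+\varepsilon}$: one must use Vaughan's identity to split the prime-weighted sum into Type I and Type II sums and estimate these by Vinogradov's method adapted to the short range $|p-X|\le H'$, optimising against the decoupling input --- this is exactly the analysis that produces the exponent $31/40$ in proving Theorem \ref{main theorem}. The remaining steps are routine adaptations of \cite[\S 9]{wei2014sums}: the Bessel's-inequality bookkeeping is standard, and the major-arc analysis, while needing a little care because the major arcs are relatively wide in this short-interval setting (radius $QX^{1-k}/H'$), reduces as usual to the Euler--Maclaurin approximation of the relevant complete and singular integrals and to the positivity of $\mathfrak S(n)$ and $\mathfrak J(n)$.
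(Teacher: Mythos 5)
Your proposal is correct and follows essentially the route the paper indicates: the paper does not give a detailed proof of this theorem, instead attributing it to an adaptation of Wei and Wooley's \S 9 argument, and your outline — subdivide $(N/2,N]$ into intervals where $(n/s)^{1/k}$ is essentially constant, apply Bessel's inequality over $n$ on the minor arcs, and feed in the mean value of Proposition~\ref{proposition: mean value} (which needs $k(k+1)$ copies of $f$, hence the lower bound $2s>k(k+1)$, i.e.\ $s>k(k+1)/2$) together with the $\theta>31/40$ minor-arc estimate of Proposition~\ref{proposition: exp sum} — is exactly that adaptation, with the bookkeeping (number of subintervals $\asymp N^{(1-\theta)/k}$, interval length $\asymp H'X^{k-1}$, saving $(H')^{-\mu(2s-k(k+1))}$ per interval) done correctly. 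The one point your sketch glosses over, as the paper does implicitly too, is the treatment of the ``intermediate'' moduli $L^B<q\le y^\delta$, where the pointwise minor-arc bound is unavailable and the narrow-major-arc Siegel--Walfisz approximation is not yet valid; this zone must be absorbed into the mean-square estimate using the major-arc approximation $|f(\alpha)|\ll yq^{-1/k+\epsilon}(1+yx^{k-1}|\beta|)^{-1/k}$ together with the observation that $2s/k>k+1\ge3$ makes the sum over such $q$ converge, which is the standard maneuver in this setting.
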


\begin{theorem}
Let $\theta > 31/40$, and $N \to \infty$. Let $E_6(N; H)$ denote the number of integers $n \equiv 6 \pmod {24}$, with $|n - N| \le HN^{1/2}$, such that equation \eqref{eq1.1} with $k = 2$ and $s = 6$ has solutions in primes $p_1, \dots, p_6$ satisfying \eqref{eq1.2}. There is a fixed $\delta > 0$ such that 
\[ E_6(N; N^{\theta/2}) \ll N^{(1-\theta)/2 - \delta}. \]
\end{theorem}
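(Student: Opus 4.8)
The plan is to run the circle method in the localised form already used for Theorem~\ref{main theorem}, specialised to $k=2$, $s=6$, and to feed in the sharper minor‑arc estimates available in the range $\theta>31/40$; this is the scheme of \cite[\S9]{wei2014sums}. Put $x=(N/6)^{1/2}$, $H=N^{\theta/2}$, $L=\log N$, fix a non‑negative smooth weight $\varpi$ supported on $[-1,1]$ with $\int\varpi>0$, and set
\[
  f(\alpha)=\sum_{p}(\log p)\,\varpi\bigl((p-x)/H\bigr)e(\alpha p^{2}),\qquad
  R(n)=\int_{0}^{1}f(\alpha)^{6}e(-n\alpha)\,d\alpha .
\]
Here $R(n)\ge0$ is a weighted count of the solutions of \eqref{eq1.1} with $|p_{j}-x|\le H$, and the contribution of higher prime powers is negligible. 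Since $(n/6)^{1/2}=x+O(H)$ throughout the interval $|n-N|\le HN^{1/2}$, a standard partition of that interval into $O(1)$ shorter subintervals, on each of which one uses a generating function of the above type with a slightly narrower window centred at the square‑root of the midpoint, reduces matters to showing that $R(n)>0$ for all but $O(N^{(1-\theta)/2-\delta})$ of the integers $n\equiv6\pmod{24}$; we suppress this routine reduction. Dissect $[0,1)$ into major arcs $\mathfrak{M}$ (the union of the intervals $|\alpha-a/q|\le P_{0}(xH)^{-1}$ over $q\le P_{0}$ and $(a,q)=1$, with $P_{0}$ a suitable small power of $N$) and minor arcs $\mathfrak{m}$. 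By Bessel's inequality,
\[
  \sum_{|n-N|\le HN^{1/2}}\Bigl|\int_{\mathfrak{m}}f(\alpha)^{6}e(-n\alpha)\,d\alpha\Bigr|^{2}\ \le\ \int_{\mathfrak{m}}|f(\alpha)|^{12}\,d\alpha ,
\]
so the theorem follows from the two estimates $\int_{\mathfrak{M}}f(\alpha)^{6}e(-n\alpha)\,d\alpha\gg H^{5}/x$ for $n\equiv6\pmod{24}$, and $\int_{\mathfrak{m}}|f(\alpha)|^{12}\,d\alpha\ll H^{9}x^{-1}N^{-\delta}$: indeed the number of exceptional $n$ is then $\ll(H^{5}/x)^{-2}\int_{\mathfrak{m}}|f|^{12}\ll x^{2}H^{-10}\cdot H^{9}x^{-1}N^{-\delta}=xH^{-1}N^{-\delta}=N^{(1-\theta)/2-\delta}$.

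The major‑arc bound is obtained exactly as in the treatment of Theorem~\ref{main theorem} with $s=6$. As $\theta>7/12$, the prime number theorem holds in intervals of length $H$ about $x$, and — with $P_{0}$ a small enough power of $N$ and classical zero‑density estimates for Dirichlet $L$‑functions — also in such intervals along arithmetic progressions of modulus at most $P_{0}$; this gives, on $\mathfrak{M}$, the approximation $f(\alpha)=\phi(q)^{-1}c_{q}(n)\,v(\alpha-a/q)+O(HL^{-A})$ with $v(\beta)=\sum_{m}\varpi((m-x)/H)e(\beta m^{2})$ for any fixed $A$. Hence
\[
  \int_{\mathfrak{M}}f(\alpha)^{6}e(-n\alpha)\,d\alpha=\mathfrak{S}_{6}(n)\,\mathfrak{I}_{6}(n)+O\bigl(H^{5}x^{-1}L^{-1}\bigr),
\]
where $\mathfrak{I}_{6}(n)=\int_{\mathbb{R}}v(\beta)^{6}e(-n\beta)\,d\beta\asymp H^{5}x^{-1}$ uniformly for $|n-N|\le HN^{1/2}$ — this by evaluating the six‑fold convolution of the push‑forward of $\varpi((t-x)/H)\,dt$ under $t\mapsto t^{2}$, together with the normalisation $6x^{2}=N$ — and $\mathfrak{S}_{6}(n)$ is a truncated singular series with $\mathfrak{S}_{6}(n)\gg1$ precisely when $n\equiv6\pmod8$ and $n\equiv0\pmod3$. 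Both congruences are forced because every $p_{j}$ in our window is an odd prime distinct from $3$, so $p_{j}^{2}\equiv1\pmod{24}$ and $\sum_{j}p_{j}^{2}\equiv6\pmod{24}$; thus $n\equiv6\pmod{24}$ is exactly the local solubility condition, and the major‑arc bound holds for every such $n$. Consequently the entire exceptional set comes from the minor arcs.

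The minor‑arc estimate is the heart of the matter, and it is here that the hypothesis $\theta>31/40$ enters. Subdivide $\mathfrak{m}$ into dyadic pieces according to the size $Q_{0}$ of the denominator $q$ in the Dirichlet approximation to $\alpha$. On the pieces with $Q_{0}$ not too small one uses the short‑interval analogue of Weyl's inequality twisted by primes: Vaughan's identity expresses $f$ as a combination of Type~I and Type~II bilinear sums whose variable ranges are squeezed by the shortness of the window, and estimating these (Cauchy--Schwarz followed by Weyl differencing of quadratic polynomials) yields a pointwise bound $\sup|f|\ll H^{1-\rho+\varepsilon}$ with a genuine power saving $\rho=\rho(\theta)>0$; optimising over the Vaughan parameters and the dyadic blocks is what restricts $\theta$ to the range $\theta>31/40$, and this is precisely the input that supersedes the corresponding estimates of \cite{wei2014sums} and \cite{huang2015exponential}. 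On the remaining pieces, where $q$ is moderately small and the Weyl bound is too weak, one prunes: $f$ is replaced by its major‑arc approximant, and the decay $v(\beta)\ll(x|\beta|)^{-1}$ valid for $|\beta|>(xH)^{-1}$, together with the factor $\phi(q)^{-1}$, furnishes an extra saving, reinforced where necessary by a large‑sieve inequality over the approximating fractions of a given height. Inserting these pointwise and mean‑square bounds into Hölder's inequality, in combination with the short‑interval mean values $\int_{0}^{1}|f(\alpha)|^{2t}\,d\alpha\ll H^{2t-1}x^{-1+\varepsilon}$ for $3\le t\le5$ — whose decisive factor $x^{-1}$, characteristic of the ``almost equal'' setting, arises from counting $2t$ squares of integers confined to a window of length $H$ about $x$ — yields $\int_{\mathfrak{m}}|f|^{12}\ll H^{9}x^{-1}N^{-\delta}$ for some $\delta>0$, which completes the proof.

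I expect the short‑interval prime Weyl inequality, together with the accompanying subdivision of the minor arcs, to be the main obstacle. The major‑arc analysis and the mean‑value estimates are routine localisations of standard circle‑method material, and the reduction above is exactly that of \cite[\S9]{wei2014sums}; what requires work is extracting the pointwise bound with a positive exponent $\rho(\theta)$ in the stated range, and then balancing it against the mean values and against the pruning on the intermediate range of denominators. The residual tasks — fixing $P_{0}$ compatibly with the available zero‑density information, verifying that the dyadic subarcs are disjoint and that each contributes at most $H^{9}x^{-1}N^{-\delta}$, and tracking the constants in the reduction to a single window — are bookkeeping.
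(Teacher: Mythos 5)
Your high-level scheme — localised circle method, Bessel's inequality, Weyl-type pointwise bounds, mean values, pruning of intermediate arcs — matches the route the paper indicates (it defers the full argument to an adaptation of \cite[\S 9]{wei2014sums}, using the improved exponential sum estimates of Proposition~\ref{proposition: exp sum}). However, there is a concrete and fatal quantitative gap in the minor-arc step as you have written it. You claim $\int_{\mathfrak m}|f|^{12}\,d\alpha \ll H^{9}x^{-1}N^{-\delta}$ and say this follows from the pointwise bound $\sup_{\mathfrak m}|f|\ll H^{1-\rho+\epsilon}$ (with $\rho=\rho(\theta)>0$ but, from a short-interval Weyl/Vaughan argument of the type in Proposition~\ref{proposition: exp sum}, necessarily small, of the order of $\delta/(2k)<1/(32k)$) together with the mean values $\int_0^1|f|^{2t}\,d\alpha\ll H^{2t-1}x^{-1+\epsilon}$ for $3\le t\le 5$. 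But H\"older then gives at best
\[
  \int_{\mathfrak m}|f|^{12}\,d\alpha \;\le\; \Bigl(\sup_{\mathfrak m}|f|\Bigr)^{12-2t}\int_0^1|f|^{2t}\,d\alpha \;\ll\; H^{(12-2t)(1-\rho)+2t-1+\epsilon}x^{-1+\epsilon} \;=\; H^{11-(12-2t)\rho+\epsilon}x^{-1+\epsilon},
\]
and the best choice $t=3$ yields $H^{11-6\rho+\epsilon}x^{-1+\epsilon}$, whence $E_6\ll x^2H^{-10}\cdot H^{11-6\rho+\epsilon}x^{-1+\epsilon}=xH^{1-6\rho+\epsilon}x^{\epsilon}$. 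To reach the target $E_6\ll xH^{-1}N^{-\delta}=N^{(1-\theta)/2-\delta}$ one would need $H^{2-6\rho}\ll N^{-\delta}$, i.e.\ $\rho>1/3$, which is orders of magnitude beyond what any such pointwise estimate can supply. Equivalently, the target $\int_{\mathfrak m}|f|^{12}\ll H^{9}x^{-1}N^{-\delta}$ asks you to beat the diagonal lower bound $\int_0^1|f|^{12}\gg H^{11}x^{-1}L^{-12}$ by a factor $H^{2}N^{\delta}$, and no sup-norm saving of size $H^{-\rho}$ can close a gap of size $H^{2}$. The pruning of intermediate arcs you invoke does not rescue this either: with $P=H^{\delta_0}$ a small power, the decay of $f^{*}$ gives $\int_{\mathfrak M\setminus\mathfrak M_0}|f^{*}|^{12}\ll H^{11}x^{-1}P^{-4}$, again short by essentially $H^2$, and a pointwise bound of the form $|f-f^{*}|\ll HL^{-A}$ on the intermediate arcs is far too crude to control the twelfth power.

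The missing idea is that one must not run Bessel against the raw twelfth moment of $f$ over $[0,1)\setminus\mathfrak M_0$. The variance $\sum_{n}|R(n)-\mathfrak S(n)\mathfrak I(n)L^{-6}|^{2}$ is a lower-order quantity: both $\int_0^1|f|^{12}$ and the square of the expected main term summed over $n$ are of size $H^{11}x^{-1}L^{-12}$, and their leading contributions cancel. To detect that cancellation one has to compare $f^{6}$ with the transform of the main term itself (or work with $L^{2}$ estimates for the error $f-f^{*}$ coming from zero-density/large-sieve inputs, as in the apparatus of Lemmas~\ref{W lemma}--\ref{lemma KLi}, rather than a crude sup bound on the intermediate arcs), and carefully exploit that the $n$ under consideration are confined to a short interval of length $\asymp HN^{1/2}$. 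This is precisely the part of \cite[\S 9]{wei2014sums} that carries the weight; asserting the final inequality $\int_{\mathfrak m}|f|^{12}\ll H^{9}x^{-1}N^{-\delta}$ without this machinery is not a bookkeeping matter but the crux, and as stated the proposal does not prove it.

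Two smaller remarks. First, you drop the sieve weights $\lambda^{\pm}$ entirely and work with the prime exponential sum directly; this is defensible here, since for $\theta>31/40>7/12$ the short-interval Siegel--Walfisz input (the paper's Lemma~\ref{lemma KLi}) does apply on the slim major arcs, and Proposition~\ref{proposition: exp sum} is already stated for $\mathbf 1_{\mathbb P}$, so this simplification is consistent with what an exceptional-set result needs. Second, the paper's definition of $E_6$ apparently contains a typo (``has solutions'' where ``has no solutions'' must be intended), which you have, implicitly and correctly, read as the exceptional set.
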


%% Question. What can we say about the exceptional set for four squares? What about exceptional sets for higher powers?
%% Perhaps, we can figure something quickly and state the result(s) here, and then Huafeng can publish the proofs in a separate "quick" paper...

{\bf Notation.}
Throughout the paper, the letter $\epsilon$ denotes a sufficiently small positive real number. Any statement in which $\epsilon$ occurs holds for each positive $\epsilon$, and any implied constant in such a statement is allowed to depend on $\epsilon$. The letter $c$ denotes a constant that depends at most on $k$ and $s$, not necessarily the same in all occurrences. As usual in number theory, $\mu(n)$, $\Lambda(n)$, $\phi(n)$, and $\tau(n)$ denote, respectively, the M\"obius function, von Mangoldt's function, Euler's totient function, and the number of divisors function. We write $e(x) = \exp(2\pi i x)$ and $(a, b) = \gcd(a, b)$, and we use $m\sim M$ as an abbreviation for the condition $ M\leq m < 2M$. If $\chi$ denotes a Dirichlet character, we set $\delta_{\chi}=1$ or $0$ according as $\chi$ is principal or not. The sums $\sum_{\chi \bmod q}$ and $\sum_{\chi \bmod q}^{*}$ denote summations over all the characters modulo $q$ and over the primitive characters modulo $q$, respectively.

\section{Outline of the proof}
\label{sec:outline}

Let $x=(n/s)^{1/k}$, $y=x^{\theta}$, $\mathcal{I}=(x-y, x+y]$, and write
\[ \label{R_{k,s}(n)} R_{k,s}(n) = \sum_{\substack{n=p_1^k+\cdots+p_s^k\\ p_i \in \mathcal{I}}}1. \]
Let $\mathbf 1_{\mathbb P}$ denote the indicator function of the primes, and suppose that we have arithmetic functions $\lambda^{\pm}$ such that, for $m \in \mathcal{I}$,
\begin{equation} \label{lambda upper lower}
\lambda^-(m) \leq \mathbf 1_{\mathbb P}(m) \leq \lambda^{+}(m).
\end{equation}
Then the vector sieve of Br\"udern and Fouvry \cite[Lemma 13]{bruedernfouvry} yields
\begin{equation}\label{vector sieve}
\mathbf 1_{\mathbb P}(m_1)\cdots\mathbf 1_{\mathbb P}(m_5)\geq \sum_{i=1}^5 \lambda^{-}(m_i)\prod_{j \ne i} \lambda^{+}(m_j)- 4\lambda^{+}(m_1)\cdots\lambda^{+}(m_5).
\end{equation}
Thus, by the symmetry of the problem, we have 
\begin{equation}\label{lower bound}
R_{k,s}(n)\geq  5 R_{k,s}(n, \lambda^{-})-4 R_{k,s}(n, \lambda^{+}),
\end{equation}
where 
\[ R_{k,s}(n, \lambda)=\sum_{\substack{n=p_1^k+\cdots+p_{s-5}^k+m_1^k+\cdots+m_5^k\\ p_i, m_j \in \mathcal{I}}}\lambda(m_1)\lambda^{+}(m_2)\cdots\lambda^{+}(m_5). \]

To prove the theorem, we show that one can choose sieve functions $\lambda^\pm$ satisfying \eqref{lambda upper lower} so that the right side of \eqref{lower bound} is positive. Our choice of $\lambda^\pm$ is borrowed from Baker, Harman and Pintz \cite{bakerharman}---namely, $\lambda^-$ and $\lambda^+$ are, respectively, the functions $a_0$ and $a_1$ constructed in \S4 of that paper. In many ways, the functions $\lambda^\pm$ imitate the indicator function $\mathbf 1_{\mathbb P}$ of the primes $p \in \mathcal I$. We will discuss the similarities in detail later (see \S\ref{sec: sieve} below) and will focus here on their most crucial property: 
\begin{itemize}
\item [(A0)] Let $A,B > 0$ be fixed (possibly large) numbers and let $x \to \infty$. If $\chi$ is a Dirichlet character modulo $q \le (\log x)^B$ and $x^{11/20+\epsilon} \le y \le x\exp\big( -(\log x)^{1/3} \big)$, then one has 
\begin{equation}\label{def kappa} 
\sum_{|m - x| \le y} \lambda^\pm(m)\chi(m) = \frac {2y}{\phi(q)\log x} \big( \delta_\chi\kappa_\pm + O\big( (\log x)^{-A} \big) \big), 
\end{equation}
where $\kappa_{\pm}$ are absolute constants satisfying
\begin{equation}\label{bounds kappa} 
\kappa_- > 0.99, \qquad \kappa_+ < 1.01. 
\end{equation}
\end{itemize}

We now sketch the application of the circle method to $R_{k,s}(n, \lambda)$. Let $\delta > 0$ be a fixed number, to be chosen later sufficiently small in terms of $k,s$ and $\theta$, and set 
\begin{equation}\label{P Q}
P=y^{\delta},\quad Q=x^{k-2}y^2P^{-1},\quad L=\log x.
\end{equation}
We write  
\[ \mathfrak{M}(q,a)= \big \{\alpha\in\mathbb{R}:|q\alpha-a|\leq Q^{-1} \big\}, \]
and define the sets of major and minor arcs by
\begin{equation}\label{major and minor}
\mathfrak{M}=\bigcup_{\substack{1\leq a \leq q \leq P\\ (a,q)=1}}\mathfrak{M}(q,a) \quad \text{and} \quad \mathfrak{m}= \big[ Q^{-1}, 1+Q^{-1} \big] \setminus \mathfrak{M},
\end{equation}
respectively. Further, for any Lebesgue measurable set $\mathfrak{B}$, we write
\[ R_{k,s}(n, \lambda; \mathfrak{B}) = \int_{\mathfrak B}f(\alpha, \mathbf 1_{\mathbb P})^{s-5} f(\alpha, \lambda) f(\alpha, \lambda^{+})^{4} e(-n\alpha) \, d\alpha, \]
where 
\begin{equation} \label{f}
f(\alpha,\lambda) = \sum_{m\in\mathcal{I}} \lambda(m)e(m^k\alpha).
\end{equation} 
By orthogonality and \eqref{major and minor}, we have
\begin{align}\label{integral}
R_{k,s}(n, \lambda) = R_{k,s}(n, \lambda; \mathfrak{M}) + R_{k,s}(n, \lambda; \mathfrak{m}).
\end{align}
In \S\ref{sec: minor arcs}, we show that when $s \ge k^2+k+1$, $\delta < 1/(16k)$, and $\theta \ge 31/40$, one has
\begin{equation} \label{minor arc bound}
R_{k,s}(n, \lambda; \mathfrak{m}) \ll y^{s-1-\delta/(3k)}x^{1-k}.
\end{equation}
Then, in \S\ref{sec: major arcs}, we show that when $\delta \le 2(\theta - 31/40)$, one has 
\begin{equation} \label{major arc bound}
R_{k,s}(n, \lambda^\pm; \mathfrak{M}) = \mathfrak C(n) y^{s-1}x^{1-k}L^{-s}\big( \kappa_\pm\kappa_+^4 + O(L^{-1}) \big),
\end{equation}
where $1\ll \mathfrak C(n) \ll 1$ for sufficiently large $n \in \mathcal H_{k,s}$, and $\kappa_\pm$ are the constants from \eqref{def kappa}. Theorem~\ref{main theorem} follows from \eqref{lower bound}, \eqref{bounds kappa}, and \eqref{integral}--\eqref{major arc bound}. \qed

\section{The sieve weights}
\label{sec: sieve}

As we said before, we use sieve weights $\lambda^\pm$ constructed by Baker, Harman and Pintz \cite{bakerharman} to have properties \eqref{lambda upper lower} and (A0) above. We remark that (A0) is a short-interval version of the Siegel--Walfisz theorem: when the functions $\lambda^\pm$ are replaced by $\mathbf 1_{\mathbb P}$, the asymptotic formula \eqref{def kappa} with $\kappa = 1$ and $y \ge x^{7/12 + \epsilon}$ is a well-known extension of a celebrated result of Huxley~\cite{huxley1972difference}. In this section, we record some additional properties of the weights $\lambda^\pm$ that we will need later in the paper:
\begin{itemize}
\item [(A1)] The functions $\lambda^\pm(m)$ vanish if $m$ has a prime divisor $p < x^{1/10}$.
\item [(A2)] Let $\mathbb S = \{ p^j : p \in \mathbb P, j \ge 2 \}$. When $m \sim 2x/3$, one can express $\lambda^\pm(m)$ as a linear combination of a bounded function supported on $\mathbb S$ and of $O(L^c)$ triple convolutions of the form
\[ \sum_{\substack{m=uvw\\ u\sim U, \ v\sim V}}\xi_{u}\eta_{v}\zeta_{w}, \]
where $|\xi_u|\leq \tau(u)^c$, $|\eta_v|\leq \tau(v)^c$, $\max(U,V)\ll x^{11/20}$, and either $\zeta_w=1$ for all $w$, or $|\zeta_w|\leq \tau(w)^c$ and $UV\gg x^{27/35}$.
\item [(A3)] Let $A, B, \epsilon > 0$ be fixed, let $\chi$ be a Dirichlet character modulo $q \le L^B$, and put $T_0 = \exp( L^{1/3} )$ and $T_1 = x^{9/20 - \epsilon}$. Then
\[ \int_{T_0}^{T_1} \bigg| \sum_{m \sim 2x/3} \lambda^\pm(m)\chi(m)m^{-1/2-it} \bigg| \, dt \ll x^{1/2}L^{-A}. \]
\end{itemize}

Of the three properties above, (A3) is the easiest to justify, since it is a part of the proof of (A0) in \cite{bakerharman}. Indeed, the method of Baker, Harman and Pintz reduces \eqref{def kappa} to the classical Siegel--Walfisz theorem by decomposing $\lambda^\pm$ into a linear combination of $O(L^c)$ arithmetic functions for which (A3) holds and then applying \cite[Lemma 11]{bakerharman} to each of them. In order to justify that the functions $\lambda^\pm$ have also properties (A1) and (A2), we need to provide some details on their construction. 

The core idea behind the construction of $\lambda^\pm$ is explained in \cite[pages 32--33, 41--42]{bakerharman}. It amounts to setting
\begin{equation}\label{def: lambda}
\lambda^\pm(m) = \mathbf 1_{\mathbb P}(m) \pm \sum_{j=1}^{J^\pm} \lambda_j^\pm(m)
\end{equation}
where $J^\pm = O(1)$ and the arithmetic functions $\lambda_j^\pm$ have the form
\[ \lambda_j^\pm(m) = \sum_{m = u_1 \cdots u_{d+1}} \xi(u_1, \dots, u_{d+1}) \qquad (4 \le d \le 7), \]
with $\xi(u_1, \dots, u_{d+1}) = 1$ or $0$. The latter functions impose various restrictions on the sizes and arithmetic properties of $u_1, \dots, u_{d+1}$ that amount to restricting the support of $\lambda_j^\pm$ to integers $m$ with very specific (undesirable) factorizations.  Moreover:
\begin{itemize}
\item [(i)] Only the cases $d=4$ and $d=6$ occur in the construction of $\lambda^-$, whereas only $d=5$ and $d=7$ occur in the construction of $\lambda^+$.
\item [(ii)] $\xi(u_1, \dots, u_{d+1}) = 0$ if any of $u_1, \dots, u_{d+1}$ has a prime divisor $<x^{1/10}$. Note that property (A1) is an immediate consequence of this observation.
\item [(iii)] When $d = 5$, $\lambda_j^+$ is supported on integers $m$ that have a divisor $u$ in the range $x^{0.46} \le u \le x^{1/2}$: see \cite[p. 42]{bakerharman}.
\item [(iv)] When $d = 4$, $\lambda_j^-$ is supported on integers $m = n_1n_2n_3$, where $n_i = x^{\alpha_i}$ with $\bm\alpha = (\alpha_1, \alpha_2)$ lying in one of regions $\Gamma$, $\Delta_2$, $\Delta_3$, or $\Delta_4$ in \cite[Diagram 1 on p. 33]{bakerharman}.
\end{itemize}

We now turn to property (A2). We note that when $\lambda_j^\pm$ is supported on integers $m = uv$, with $x^{9/20} \le u \le x^{11/20}$, it has property (A2). Thus, by (iii) above, property (A2) holds for all terms $\lambda_j^+$ with $d = 5$. Moreover, the same is true for $\lambda_j^-$ with $d = 4$ and $\bm\alpha$ in one of the regions $\Delta_3$ or $\Delta_4$: we have $0.46 \le \alpha_1 \le 0.5$ when $\bm\alpha \in \Delta_4$, and $0.46 \le \alpha_1+\alpha_2 \le 0.54$ when $\bm\alpha \in \Delta_3$.

We next consider the case $d \ge 6$ and suppose that the variables $u_i$ have been labelled so that $u_1 \ge u_2 \ge \cdots \ge u_{d+1}$. When $\lambda_j^\pm$ is supported on integers $m = u_1 \cdots u_{d+1}$ with $u_4 \cdots u_{d+1} \ge x^{11/20}$, we have 
\[ u_1u_2u_3 \ll x^{9/20} \quad \text{and} \quad u_4 \le \sqrt[3]{u_1u_2u_3} \ll x^{3/20}. \]  
Since $u_5 \cdots u_{d+1} \ll x^{1/2}$, we can then verify that $\lambda_j^\pm$ has property (A2) by grouping the variables $u_1, \dots, u_{d+1}$ into $u = u_1u_2u_3$, $v = u_5 \cdots u_{d+1}$, and $w = u_4$. On the other hand, when $\lambda_j^\pm$ is supported on integers $m = u_1 \cdots u_{d+1}$ with $u_4 \cdots u_{d+1} \le x^{11/20}$, we note that
\[ u_1u_2 \ll x^{1/2} \quad \text{and} \quad u_3 \le \sqrt[3]{u_1u_2u_3} \ll x^{1/5}. \]  
Thus, we can verify that $\lambda_j^\pm$ has property (A2) by grouping the variables $u_1, \dots, u_{d+1}$ into $u = u_1u_2$, $v = u_4 \cdots u_{d+1}$, and $w = u_3$.  
  
The functions $\lambda_j^-$ with $d = 4$ and $\bm\alpha \in \Delta_2$ are supported on integers $m = u_1 \cdots u_5$, where 
\begin{equation}\label{eq3.2} 
x^{1/10} \le u_4 \le u_3 \le u_2 \le u_1, \quad \text{and} \quad x^{0.32} \le u_1u_2 \le x^{0.36}. 
\end{equation}
(These functions arise by ``decomposing twice the variable $n_3$'' in \cite[(4.24)]{bakerharman}, so we have $u_1u_2 = x^{\alpha_1+\alpha_2}$.) Since the inequalities \eqref{eq3.2} imply that
\[ x^{1/10} \le u_4 \le u_3 \le x^{0.18}, \quad u_1u_2u_3 \le x^{0.54}, \quad u_5 \ll x^{0.48}, \]
we can verify that $\lambda_j^-$ has property (A2) by grouping the variables $u_1, \dots, u_5$ into $u = u_1u_2u_3$, $v = u_5$, and $w = u_4$. Similarly, the functions $\lambda_j^-$ with $d = 4$ and $\bm\alpha \in \Gamma$ are supported on integers $m = u_1 \cdots u_5$, where 
\[ x^{0.32} \ll u_1u_2, u_3u_4 \ll x^{0.36}, \quad \text{and} \quad u_5 \le x^{1/3}. \]
(In this case, we have $u_1u_2 = x^{\alpha_1}$ and $u_5 = x^{\alpha_2}$.) If we assume that the variables are labelled so that $u_1 \le u_2$ and $u_3 \le u_4$, we have
\[ u_2u_4 \le x^{0.72}/(u_1u_3) \le x^{0.52}, \quad u_1u_5 \le x^{0.18}x^{1/3} < x^{0.52}, \quad u_3 \le x^{0.18}. \]
Hence, we can once again verify that $\lambda_j^-$ has property (A2) by grouping the variables $u_1, \dots, u_5$ into $u = u_2u_4$, $v = u_1u_5$, and $w = u_3$. 

We have shown that each term $\lambda_j^\pm$ on the right side of \eqref{def: lambda} satisfies (A2). It remains to show that so does the indicator function $\mathbf 1_{\mathbb P}$. The proof of \cite[Theorem 1.1]{choi2006kumchev} uses Heath-Brown's identity to establish (A2) for von Mangoldt's function. In the case of $\mathbf 1_{\mathbb P}$, we can use a variant of that argument based on Linnik's identity instead of Heath-Brown's.

\section{The minor arcs}
\label{sec: minor arcs}
 
In this section, we establish inequality \eqref{minor arc bound}. Our main tools are Propositions~\ref{proposition: mean value} and~\ref{proposition: exp sum} below.

\begin{proposition}\label{proposition: mean value}
Suppose that $k \ge 2$, $s \geq k^2+k$, and $y \geq x^{1/2}$. Then for any bounded arithmetic function $\lambda$, one has
\begin{equation}
I_s(\lambda) := \int_0^1 |f(\alpha, \lambda)|^s \, d\alpha  \ll  y^{s-1} x^{1-k+\epsilon}.
\end{equation}
\end{proposition}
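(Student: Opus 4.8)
The plan is to bound the mean value $I_s(\lambda)$ by reducing it, via a dyadic dissection and Hölder's inequality, to a Weyl-type exponential sum estimate and a Vinogradov-style mean value over the short interval $\mathcal I$. Since $\lambda$ is bounded, we may as well replace $|f(\alpha,\lambda)|$ by $|g(\alpha)| := \big| \sum_{m \in \mathcal I} e(m^k\alpha) \big|$ after splitting $\mathcal I$ into $O(1)$ dyadic pieces and absorbing the constant; thus it suffices to bound $\int_0^1 |g(\alpha)|^s \, d\alpha$. First I would recall the short-interval analogue of Hua's inequality and of the Vinogradov mean value: for $y \ge x^{1/2}$ and $t \ge k(k+1)/2$ (or the relevant threshold coming from the main conjecture in Vinogradov's mean value theorem, now a theorem of Bourgain--Demeter--Guth \cite{bourgain2015VMT} in the range $k \ge 4$, with classical results sufficing for $k = 2, 3$), one has
\[ \int_0^1 \Big| \sum_{m \in \mathcal I} e(\beta_1 m + \beta_2 m^2 + \cdots + \beta_k m^k) \Big|^{2t} \, d\beta_1 \cdots d\beta_k \ll y^{2t - k(k+1)/2 + \epsilon}, \]
and the standard device of completing the one-dimensional integral $\int_0^1 |g(\alpha)|^{2t}\,d\alpha$ to the full $k$-dimensional integral by inserting trivial integrations over $\beta_1, \dots, \beta_{k-1}$ over unit intervals, using the fact that the $m^k$-phase is the only one that matters after a change of variables. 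The key point is the bookkeeping of the saving: the completed integral over the box of side lengths dictated by the interval $\mathcal I$ (of length $2y$ centred at $x$) produces, after rescaling the variable $m = x + h$ with $|h| \le y$, a mean value of length $2y$ but with coefficients of size up to $x^{k-1}y, \dots, x y^{k-1}, y^k$ on the successive power sums; tracking the resulting Jacobian gives the factor $x^{1-k}$.

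More concretely, I would proceed as follows. Write $m = x + h$ with $h$ ranging over an interval of length $2y$; then $m^k\alpha = \sum_{i=0}^k \binom{k}{i} x^{k-i} h^i \alpha$, so $g(\alpha)$ becomes $e(x^k\alpha)$ times an exponential sum in $h$ with polynomial phase $\sum_{i=1}^k \gamma_i h^i$ where $\gamma_i = \binom{k}{i} x^{k-i}\alpha$. Taking $2t = k(k+1)$ (so $s \ge k^2 + k = 2t$; for $s > 2t$ one interpolates trivially using $|g(\alpha)| \le 2y$), I would bound $\int_0^1 |g(\alpha)|^{2t}\,d\alpha = \int_0^1 |G(\gamma(\alpha))|^{2t}\,d\alpha$ where $G$ is the short Weyl sum in $h$. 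The change of variables $\alpha \mapsto \gamma_k = k x \alpha$ (the top coefficient, which is $\gg x$, genuinely nonconstant) has Jacobian $\gg x^{-1}$... but $\gamma_k$ alone traverses an interval of length $\gg x$, i.e. many unit intervals, and by periodicity of $G$ in each coordinate modulo $1$ we reduce to the unit torus, gaining a factor $x^{?}$. The cleanest route is instead: complete to the full $k$-dimensional Vinogradov integral by writing $\int_0^1 |G(\gamma_1(\alpha), \dots, \gamma_k(\alpha))|^{2t}\,d\alpha$ and noting that as $\alpha$ ranges over $[0,1]$, the point $(\gamma_1, \dots, \gamma_k)$ traces a curve; bound the single integral by the full integral over $\mathbb T^k$ at the cost of controlling how densely the curve fills the torus. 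Using that $\gamma_k = kx\alpha$ and the lower-order $\gamma_i$ are determined by $\alpha$, and covering $[0,1]$ by $O(x^{k-1})$-ish... Actually the right statement is: by orthogonality $\int_0^1 |g(\alpha)|^{2t}\,d\alpha$ counts solutions of $m_1^k + \cdots + m_t^k = m_{t+1}^k + \cdots + m_{2t}^k$ with all $m_i \in \mathcal I$; after the substitution $m_i = x + h_i$ this becomes a system of Vinogradov type $\sum_j h_j^i = \sum_j h_{t+j}^i$ for $1 \le i \le k$ up to error terms of lower degree, which by the (proven) main conjecture has $\ll y^{2t - k(k+1)/2 + \epsilon} = y^{t} \cdot y^{k(k+1)/2 - k(k+1)/2 + \epsilon}$... let me recompute: $2t - k(k+1)/2 = k(k+1) - k(k+1)/2 = k(k+1)/2$, so the count is $\ll y^{k(k+1)/2 + \epsilon}$; comparing with the target $y^{2t-1}x^{1-k+\epsilon} = y^{k(k+1)-1}x^{1-k+\epsilon}$, the two agree precisely when $y^{k(k+1)/2} \asymp y^{k(k+1)-1} x^{1-k}$, i.e. $x^{k-1} \asymp y^{k(k+1)/2 - 1}$, which is false in general — so in fact the short-interval estimate is \emph{stronger} than the target when $y$ is small, and one simply needs the crude bound; the factor $x^{1-k}$ arises only in the proper short-interval mean value with the power-of-$x$ coefficients, and the honest derivation goes through the completion argument with Jacobian $x^{-(k-1)}$ coming from the $k-1$ "free" completed variables each costing a factor $x$ to complete and $\gamma_i \asymp x^{k-i}$.

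The main obstacle — and the step requiring genuine care — is the transition from the short-interval mean value (with unit-size coefficients, giving a saving that is a power of $y$ only) to the stated bound with the arithmetically crucial factor $x^{1-k}$: one must correctly set up the completion of the incomplete $k$-dimensional exponential integral, verifying that the coefficients $\gamma_i(\alpha) = \binom{k}{i} x^{k-i}\alpha$ sweep out the appropriate box and that the change of variables contributes exactly $x^{-1} \cdot x^{-1} \cdots$ ($k-1$ factors, since the top coefficient $\gamma_k$ is already $\asymp x$ and handled by periodicity) to yield $x^{1-k}$; this is the standard "short interval Vinogradov" mechanism (cf. the treatment in Wei--Wooley \cite{wei2014sums}), but the accounting of which variable is "for free" versus "for a power of $x$" is exactly where an off-by-one in the exponent would occur. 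For $k = 2, 3$ the needed mean value is classical (Hua, and the case $k=3$ of Vinogradov's theorem), while for $k \ge 4$ one invokes \cite{bourgain2015VMT} to reach the optimal exponent $k(k+1)/2$ at the threshold $2t = k(k+1)$; the hypothesis $s \ge k^2 + k$ is exactly what makes $2t = k(k+1)$ admissible, and $s > k^2 + k$ is handled by the trivial bound $|f(\alpha,\lambda)| \ll y$ on the extra factors.
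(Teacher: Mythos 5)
Your overall strategy---reduce to the constant weight $\mathbf 1$ and then invoke a short-interval Vinogradov-type estimate, with Bourgain--Demeter--Guth supplying the mean-value theorem at the threshold $s = k^2+k$---is exactly the route the paper takes. However, there are two places where the argument is either wrong as stated or where a genuine gap is acknowledged but not closed.

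First, the reduction to $\lambda = \mathbf 1$. You write that since $\lambda$ is bounded ``we may as well replace $|f(\alpha,\lambda)|$ by $|g(\alpha)|$ after splitting into dyadic pieces.'' This is not a pointwise comparison: a bounded $\lambda$ can produce cancellation in $f(\alpha,\lambda)$ that is absent in $f(\alpha,\mathbf 1)$, and conversely $|f(\alpha,\lambda)|$ can be small where $|f(\alpha,\mathbf 1)|$ is large, so neither function dominates the other. The correct reduction, which the paper records as the ``maximal inequality''
\[
\int_0^1 |f(\alpha, \lambda)|^{s} \, d\alpha \ll y^{s-k^2-k}\int_0^1 |f(\alpha, \mathbf 1)|^{k^2+k} \, d\alpha,
\]
goes through the trivial bound $|f(\alpha,\lambda)| \ll y$ on $s-(k^2+k)$ factors and then through \emph{orthogonality}: the even moment $\int_0^1 |f(\alpha,\lambda)|^{k^2+k}\,d\alpha$ is a weighted count of solutions to $m_1^k+\cdots+m_t^k=m_{t+1}^k+\cdots+m_{2t}^k$ with weights bounded by a constant, and is therefore at most $O(1)$ times the corresponding unweighted count $\int_0^1 |f(\alpha,\mathbf 1)|^{k^2+k}\,d\alpha$. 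This is where the hypothesis that $\lambda$ is bounded is used, and it is a statement about moments, not about pointwise sizes.

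Second, and more seriously, your central assertion that ``after the substitution $m_i = x+h_i$ this becomes a system of Vinogradov type $\sum_j h_j^i = \sum_j h_{t+j}^i$ for $1 \le i \le k$'' is false: one equation in the $m_i$ becomes one equation in the $h_i$, not $k$. The single equation has \emph{more} solutions than the Vinogradov system, so the system count $\ll y^{k(k+1)/2+\epsilon}$ is a lower bound on the difficulty, not an upper bound on the quantity you need. You appear to notice this and pivot to a ``completion/Jacobian'' heuristic, but you never actually carry out the completion, and you explicitly flag it as ``the step requiring genuine care.'' That step is the entire content of the proposition: the single short-interval equation $\sum_i \binom{k}{i}x^{k-i}\sigma_i=0$ (with $\sigma_i$ the $i$th power-sum differences) has large, geometrically decaying coefficients, and showing that it constrains the $\sigma_i$ well enough to produce the factor $x^{1-k}$---while also controlling the diagonal contribution $y^{k(k+1)/2}$ via Vinogradov's mean value theorem---is precisely what \cite[Theorem 3]{daemen2010asymptotic} does. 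The paper closes the gap by citing that result; your argument leaves it open, so as written it does not constitute a proof. The place to look is Daemen's theorem (and the comparison in \cite{wei2014sums}), which handles the transference and shows why the hypothesis $y \ge x^{1/2}$ is exactly what makes the diagonal term $y^{k(k+1)/2}$ subordinate to the target $y^{k(k+1)-1}x^{1-k}$.
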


\begin{proposition}\label{proposition: exp sum}
Let $k \ge 2$, $0 < \delta < 1/(16k)$, and $y \ge x^{31/40}$, and suppose that $\alpha \in \mathfrak m$. Then
\[ f(\alpha, \mathbf 1_{\mathbb P}) \ll y^{1 - \delta/(2k) + \epsilon}. \]
\end{proposition}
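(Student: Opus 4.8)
The plan is to reduce the prime exponential sum to bilinear sums of Type~I and Type~II and then to bound those by Weyl-type estimates adapted to short intervals. For $x$ large one has $\mathcal I\subset[2x/3,4x/3)$, so the decomposition of $\mathbf 1_{\mathbb P}$ recorded at the end of Section~\ref{sec: sieve} (the analogue of property (A2) obtained from Linnik's identity) applies on $\mathcal I$ and expresses $f(\alpha,\mathbf 1_{\mathbb P})$ as a linear combination of $O(L^c)$ triple convolution sums
\[
g(\alpha)=\sum_{\substack{uvw\in\mathcal I\\ u\sim U,\ v\sim V}}\xi_u\eta_v\zeta_w\,e\big(\alpha(uvw)^k\big),\qquad
|\xi_u|\le\tau(u)^c,\quad|\eta_v|\le\tau(v)^c,\quad\max(U,V)\ll x^{11/20},
\]
where either $\zeta_w=1$ for every $w$ (\emph{Type~I}) or $|\zeta_w|\le\tau(w)^c$ and $UV\gg x^{27/35}$ (\emph{Type~II}), plus a term supported on $\mathcal I\cap\mathbb S$ whose contribution is $\ll x^{11/40+\epsilon}$ and may be discarded. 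Hence it suffices to show $g(\alpha)\ll y^{1-\delta/(2k)+\epsilon}$ for every such $g$. The one fact about $\alpha$ that the argument uses is that, since $\alpha\in\mathfrak m$, Dirichlet's theorem with denominator $Q$ furnishes coprime $a,q$ with $P<q\le Q$ and $|q\alpha-a|\le Q^{-1}$; had $q\le P$, the point $\alpha$ would lie in $\mathfrak M$.

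In a Type~I sum the unweighted variable $w$ plays the role of a smooth variable: one isolates the inner sum $\sum_{w}e\big(\alpha(uv)^kw^k\big)$ over an interval of length $\asymp y/(UV)$, applies a short-interval exponential-sum estimate to it, and sums the outcome over $u,v$ — the classical Type~I argument. In a Type~II sum one instead applies Cauchy--Schwarz: here $UV\gg x^{27/35}$ forces the $w$-variable to be short ($W\ll x^{8/35}$), and together with $\max(U,V)\ll x^{11/20}$ this lets one regroup $u,v,w$ into a genuinely bilinear form $\sum_{a\sim A}\sum_b\gamma_a\gamma'_b\,e(\alpha a^kb^k)$ with $ab\in\mathcal I$, divisor-bounded coefficients, the shorter range $\ll x^{1/2}$, and the longer range $\ll x^{43/70}$; Cauchy--Schwarz over the shorter variable and expansion of the square produce a diagonal term $\ll y^{2-\delta/k+\epsilon}$ (a direct count using the restriction $ab\in\mathcal I$, divisor bounds, and $x^{1/2}\ll y^{1-\delta/k}$) together with an off-diagonal sum $\sum_{b_1\ne b_2}\gamma'_{b_1}\overline{\gamma'_{b_2}}\sum_{a\in\mathcal J(b_1,b_2)}e\big(\alpha a^k(b_1^k-b_2^k)\big)$. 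In both cases one is reduced to estimating a short-interval exponential sum $\sum_{m\in\mathcal J}e(\beta m^k)$ over $k$-th powers, in which $\mathcal J$ is a short interval — short, but long enough, when $\theta>31/40$, for Weyl's inequality to be effective here — and $\beta$ is, up to the small error bounded in the minor-arc hypothesis, an integer multiple of $a/q$.

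To bound $\sum_{m\in\mathcal J}e(\beta m^k)$ I would expand $m$ about the centre of $\mathcal J$ and apply Weyl's inequality to the resulting degree-$k$ polynomial in the short variable, after producing — by a fresh appeal to Dirichlet's theorem — a rational approximation to its leading coefficient at the scale dictated by $|\mathcal J|$, and after setting aside the sparse set of frequencies (equivalently, of pairs $b_1,b_2$) for which no such approximation yields cancellation; that exceptional set is controlled by relating it to rational approximations of $\alpha$ itself and invoking $\alpha\notin\mathfrak M$, and the greatest common divisors $(q,b_1^k-b_2^k)$ arising are handled on average over $b_1,b_2$ by standard divisor-sum bounds. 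Assembling these estimates, summing over $b_1,b_2$, and taking square roots gives $g(\alpha)\ll y^{1-\delta/(2k)+\epsilon}$, the exponent $1/(2k)$ being what the Weyl estimate contributes after the single Cauchy--Schwarz step. The crux of the matter, and the main obstacle, is precisely this last estimate in the extremal regime where $|\mathcal J|$ is as small as the numerology permits: the modulus $q$ supplied by the minor-arc hypothesis lies in the awkward range $(P,Q]$ with $Q=x^{k-2}y^2P^{-1}$ far larger than $|\mathcal J|$, so one cannot simply complete the sum modulo $q$, and one must work with a fresh approximation at the correct scale, confine the exceptional frequencies to a negligible set, and carry the small saving $y^{\delta/(2k)}$ intact through the Cauchy--Schwarz loss and the divisor bookkeeping. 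The thresholds $x^{11/20}$ and $x^{27/35}$ in property (A2) — and hence the hypothesis $\theta>31/40$ — are exactly what keep every relevant interval length above the break-even point for these Weyl estimates while holding the diagonal contributions below $y^{2-\delta/k}$, so the balancing leaves essentially no margin.
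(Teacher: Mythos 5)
Your proposal takes a genuinely different route from the paper, and it has a gap that is not cosmetic. The paper does not work with the property~(A2) decomposition on the minor arcs. Instead it applies Vaughan's identity to $\Lambda$ with the parameter $X = xy^{-1+2\rho}$, $\rho = (31t_k)^{-1}$, so as to produce Type~I sums with $U \le X^2$ and Type~II sums with $X \le U \le xX^{-1}$; these ranges are engineered precisely to meet the hypotheses \eqref{type2range} and \eqref{type1range} of Lemmas~\ref{type 2} and~\ref{type 1}. The ranges in~(A2), with $\max(U,V)\ll x^{11/20}$ and $UV\gg x^{27/35}$ in the bilinear case, are tailored to the Dirichlet-polynomial mean-value estimate \cite[Theorem~2.1]{choi2006kumchev} used on the major arcs, not to the exponential-sum estimates needed here. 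In particular, in the $\zeta_w\equiv 1$ case of~(A2) there is no lower bound on $UV$; it can approach $x$, in which case the inner $w$-sum over the interval of length $\asymp y/(UV)$ has too few terms for any Weyl-type argument to profit, and you would need a complementary device that your sketch does not supply. Vaughan's identity, with the upper bound $U\le X^2\le y^{1-2\rho}$ on the Type~I range, sidesteps this issue entirely.

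The more serious gap is analytic. You repeatedly invoke ``Weyl's inequality'' for the short-interval exponential sums $\sum_{m\in\mathcal J}e(\beta m^k)$ after centering, but for $k\ge 3$ the classical Weyl inequality saves only a factor of order $|\mathcal J|^{2^{1-k}}$, which is hopeless when one needs to run $\rho$ of order $k^{-2}$ and hold $\theta$ as low as $31/40$ uniformly in $k$. The paper's Lemma~\ref{Weyl bound} is not Weyl's inequality; it is a Vinogradov-method estimate in the spirit of Daemen and Huang, and its strength comes from feeding in the Bourgain--Demeter--Guth resolution of the main conjecture in Vinogradov's mean value theorem. That input is what makes $t_k = k^2-k+1$ (rather than something exponential in $k$) possible, and it is exactly what allows $s\ge k^2+k+1$ and $\theta>31/40$ in Theorem~\ref{main theorem}. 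Your proposal nowhere uses it, and without it the off-diagonal estimate you identify as ``the crux of the matter'' cannot close. Finally, a small correction of intuition: the exponent $1/(2k)$ in the target bound comes from the term $y^{1+\epsilon}P^{-1/(2k)}$ in Lemmas~\ref{type 2} and~\ref{type 1}, i.e.\ from the size $P=y^{\delta}$ of the major-arc cutoff, with the Weyl-type saving $y^{1-\rho}$ absorbed because $\rho>\delta/(2k)$; it is not the residue of a single Cauchy--Schwarz applied to a Weyl saving.
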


It is straightforward to deduce \eqref{minor arc bound} from these propositions. First, we remark that the functions $\lambda^\pm$ are bounded by construction---they are linear combinations of a bounded number of indicator functions. Thus, we may apply Proposition \ref{proposition: mean value} to $\lambda = \lambda^\pm$. By H\"older's inequality,
\[ |R_{k,s}(n, \lambda; \mathfrak m)| \le \Big( \sup_{\alpha \in \mathfrak m} |f(\alpha, \mathbf 1_{\mathbb P})| \Big) I_{s-1}(\lambda)^u I_{s-1}(\lambda^+)^{4u} I_{s-1}(\mathbf 1_{\mathbb P})^{1-5u}, \]
where $u = (s-1)^{-1}$. Thus, when $s \ge k^2+k+1$, we may use Propositions \ref{proposition: mean value} and \ref{proposition: exp sum} to get
\[ R_{k,s}(n, \lambda; \mathfrak m) \ll y^{1-\delta/(2k)+\epsilon} y^{s-2}x^{1-k+\epsilon} \ll y^{s-1-\delta/(3k)} x^{1-k}, \]
provided that $\delta$ and $y$ satisfy the hypotheses of Proposition \ref{proposition: exp sum} and $\epsilon$ is chosen sufficiently small; this verifies \eqref{minor arc bound}. In the remainder of this section, we prove the propositions.

\subsection{Proof of Proposition \ref{proposition: mean value}}

This is a variant of \cite[Proposition 2.2]{wei2014sums}, which we have extended in two ways. First, we have included the arbitrary coefficients $\lambda$. This is straightforward, due to the ``maximal inequality''
\begin{equation}\label{max inequality}
\int_0^1 |f(\alpha, \lambda)|^{s} \, d\alpha \ll y^{s-k^2-k}\int_0^1 |f(\alpha, \mathbf 1)|^{k^2+k} \, d\alpha,
\end{equation}
where $\mathbf 1$ is the constant function $\mathbf 1(n) = 1$ (compare this to \cite[p. 1136]{wei2014sums}). Like Wei and Wooley, we estimate the right side of \eqref{max inequality} by means of \cite[Theorem 3]{daemen2010asymptotic} and standard bounds for Vinogradov's mean-value integral. In particular, the recent work of Bourgain, Demeter and Guth \cite{bourgain2015VMT} allows us to reduce the lower bound on $s$ to the one stated above. \qed

\subsection{Proof of Proposition \ref{proposition: exp sum}}

Although it looks somewhat different, Proposition \ref{proposition: exp sum} is merely a slight variation of the main theorem of Huang \cite{huang2015exponential}, and our proof follows closely Huang's. We first obtain variants of some technical estimates from~\cite{huang2015exponential} by making some slight changes to Huang's arguments. 

\begin{lemma}\label{Weyl bound}
Let $k \geq 2$ be an integer and $\rho$ be real, with $0 < \rho \le t_k^{-1}$, where
\[ t_k = \begin{cases} 2 &\text{if } k = 2, \\ k^2 - k + 1 &\text{if } k \ge 3. \end{cases} \]
Suppose also that $y = x^{\theta}$, where 
\[ \frac 1{2 - t_k\rho} \leq \theta \leq 1. \] 
Then either
\[ \sum_{x < m \le x+y} e(m^k\alpha) \ll y^{1-\rho+\epsilon}, \]
or there exist integers $a,q$ such that
\[ 1 \le q \le y^{k\rho}, \quad (a,q) = 1, \quad |q\alpha - a| \le x^{1-k}y^{k\rho - 1}, \]
and
\[ \sum_{x < m \le x+y} e(m^k\alpha) \ll y^{1-\rho+\epsilon} + \frac {y}{(q + yx^{k-1}|q\alpha - a|)^{1/k}}. \]
\end{lemma}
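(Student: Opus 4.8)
The plan is to prove Lemma~\ref{Weyl bound} by combining a Weyl-type differencing estimate for the short exponential sum $g(\alpha) = \sum_{x < m \le x+y} e(m^k\alpha)$ with the standard dichotomy supplied by Dirichlet's approximation theorem. First I would fix a parameter $Q_0 = y^{k\rho}$ and apply Dirichlet's theorem to find integers $a, q$ with $1 \le q \le Q_0$, $(a,q) = 1$, and $|q\alpha - a| \le (qQ_0)^{-1} \le x^{1-k}y^{k\rho-1}$, using that $x^{1-k}y^{k\rho-1} \ge Q_0^{-1}$, which follows from $y \ge x^{(k-1)/(2)}$ — this is exactly where the hypothesis $\theta \ge 1/(2 - t_k\rho)$ enters, since that inequality forces $y^{2 - t_k\rho} \ge x$, and hence (after checking that $t_k\rho \le k\rho$, i.e. $t_k \le k$ which fails, so more care is needed) — more precisely one wants $y^{k\rho}x^{k-1} \le y^k$, i.e. $x^{k-1} \le y^{k - k\rho} = y^{k(1-\rho)}$, i.e. $\theta(1-\rho) \ge (k-1)/k$; this is implied by the stated lower bound on $\theta$ after using $t_k \ge k-1$ and $\rho \le t_k^{-1}$ to control the loss. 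I would lay out this arithmetic carefully at the start, since getting the ranges to match the stated conclusion is the bookkeeping heart of the argument.

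Next, with such $a, q$ in hand, I would distinguish two regimes according to the size of $q + yx^{k-1}|q\alpha - a|$. If $q + yx^{k-1}|q\alpha - a|$ is large — say at least $y^{k\rho}$ — then the second term $y\,(q + yx^{k-1}|q\alpha-a|)^{-1/k}$ is already $\ll y^{1-\rho}$, and the trivial-plus-error bound $g(\alpha) \ll y^{1-\rho+\epsilon} + y\,(q + yx^{k-1}|q\alpha-a|)^{-1/k}$ holds (indeed the first term can be dropped), putting us in the second alternative of the lemma. If instead $q + yx^{k-1}|q\alpha - a|$ is small, then $\alpha$ is very close to a rational with small denominator, and I would invoke the classical minor-arc/major-arc estimate for Weyl sums over short intervals: after $k-1$ Weyl steps (or, for the sharper ranges when $k \ge 3$, the Weyl inequality refined via Vinogradov-type or $k$-th power moment input that yields the exponent $t_k = k^2-k+1$ rather than $2^{k-1}$), one obtains $g(\alpha) \ll y^{1-\rho+\epsilon}$ provided $\alpha$ is not too close to a rational of denominator $\le y^{k\rho}$ — but in the small regime it \emph{is} close to such a rational, so one instead gets the explicit formula $g(\alpha) = q^{-1}S(q,a)\, v(\alpha - a/q) + O(\cdots)$ with $v$ the relevant integral, and estimating $S(q,a) \ll q^{1-1/k+\epsilon}$ and $v(\beta) \ll y(1 + y x^{k-1}|\beta|)^{-1/k}$... wait, the integral bound is $\ll \min(y, |\beta|^{-1/k} x^{(1-k)/k})$ which combines to $\ll y(1 + yx^{k-1}|\beta|)^{-1/k}$, so $g(\alpha) \ll y q^{-1/k}(1 + yx^{k-1}|\beta|)^{-1/k}\cdot q^{\epsilon} + (\text{error}) \ll y(q + yx^{k-1}|q\alpha-a|)^{-1/k} y^\epsilon$, again landing in the second alternative.

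The main obstacle I expect is twofold. First, producing the exponent $t_k = k^2 - k + 1$ (as opposed to the classical Weyl exponent $2^{k-1}$) requires invoking the modern strong bounds — the resolution of the Vinogradov mean value theorem by Bourgain--Demeter--Guth \cite{bourgain2015VMT}, or rather a short-interval van der Corput / Vinogradov estimate derived from it — and checking that these inputs survive the restriction of the variable to a short interval $(x, x+y]$ with $y$ a power of $x$ less than $x$; the short-interval feature is what distinguishes this from the textbook Weyl inequality and is the technical crux, handled in Huang's paper which we follow. Second, one must verify that the $\theta$-range $1/(2-t_k\rho) \le \theta \le 1$ is exactly the range in which the Dirichlet-theorem denominator $y^{k\rho}$ and the modulus-of-approximation box $x^{1-k}y^{k\rho-1}$ are consistent (i.e. $y^{k\rho} \cdot x^{1-k}y^{k\rho-1} \ge 1$ fails in general, so the right condition is $y^{k\rho} \le$ the reciprocal of the box-size times..." — the clean statement is that we need $x^{1-k} y^{k\rho - 1} \le y^{-k\rho}$ only when we want a genuine minor arc, but here we allow the major-arc conclusion too, so the binding constraint is merely that the approximation box be nonempty, $x^{1-k}y^{k\rho-1} \le (qy^{k\rho})^{-1}$ is \emph{not} needed); the real role of $\theta \ge 1/(2-t_k\rho)$ is to ensure the Weyl differencing gains beat the trivial bound, namely that $y \cdot (x^{k-1}/y^k)^{1/t_k} \ll y^{1-\rho}$, which rearranges precisely to $\theta(2 - t_k\rho) \ge 1$. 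I would make this chain of equivalences explicit, since it is the one place where a reader could get lost, and then the rest is routine bookkeeping following \cite{huang2015exponential}.
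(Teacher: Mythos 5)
Your broad strategy — a Dirichlet dichotomy between a major-arc approximation (Daemen's asymptotic plus Gauss sum and van der Corput bounds) and a minor-arc Weyl estimate powered by Bourgain--Demeter--Guth's resolution of the Vinogradov mean value theorem, all following Huang's Lemma 1 template — is the same as the paper's, which literally instructs the reader to run Huang's argument with $\gamma = \rho^{-1}(t_k-1)^{-1}$ (and, for $k=2$, to exploit a slight refinement of the bound on Huang's p.\ 512). However, two of the arithmetic checks you identify as ``the bookkeeping heart of the argument'' do not actually go through, so the proposal has genuine gaps rather than mere imprecision.

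First, the Dirichlet step: you take $Q_0 = y^{k\rho}$ and claim $(qQ_0)^{-1} \le x^{1-k}y^{k\rho-1}$. Since $q$ may equal $1$, this requires $y^{-k\rho} \le x^{1-k}y^{k\rho-1}$, that is $y^{2k\rho-1} \ge x^{k-1}$, which is false for general admissible $\rho$ and $\theta$ (e.g.\ it can never hold once $2k\rho \le 1$, since then the left side is $\le 1$). You notice the mismatch (``more care is needed'') and then try to patch it with the inequality $x^{k-1} \le y^{k(1-\rho)}$, but this too is not implied by the hypothesis $\theta \ge 1/(2-t_k\rho)$ for all $\rho \le t_k^{-1}$ when $k \ge 3$. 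The correct move, consistent with the conclusion's shape, is to take the Dirichlet parameter to be $Q_1 = x^{k-1}y^{1-k\rho}$ (so that $|q\alpha - a| \le Q_1^{-1} = x^{1-k}y^{k\rho-1}$ is automatic) and then split according to whether $q \le y^{k\rho}$ or $y^{k\rho} < q \le Q_1$; one checks $Q_1 \ge y^{k\rho}$ easily from $\rho \le 1/2$.

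Second, your diagnosis of where the hypothesis $\theta \ge 1/(2-t_k\rho)$ comes in is not right. You assert that the Weyl-type gain is $y\,(x^{k-1}/y^k)^{1/t_k} \ll y^{1-\rho}$ and that this ``rearranges precisely to $\theta(2-t_k\rho)\ge 1$.'' In fact that inequality rearranges to $\theta(k - \rho t_k) \ge k-1$, i.e.\ $\theta \ge (k-1)/(k-\rho t_k)$. The two thresholds coincide only for $k=2$ (or at the single endpoint $\rho = t_k^{-1}$); for $k\ge 3$ and $\rho$ strictly below $t_k^{-1}$ one has $(k-1)/(k-\rho t_k) > 1/(2-t_k\rho)$, so the hypothesis does \emph{not} furnish the estimate in the form you wrote. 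This tells you that the short-interval Weyl bound coming out of Huang's argument with VMT input is not $(x^{k-1}/y^k)^{1/t_k}$; the fact that the paper sets $\gamma = \rho^{-1}(t_k-1)^{-1}$ (with $t_k - 1$ rather than $t_k$) is a hint that the exponent you want appears in a different combination. You would need to unpack Huang's Lemma 1 carefully — track where the parameter $\gamma$ enters and exactly which power of the approximation data the VMT step produces — before the $\theta$-threshold arithmetic can be verified, and that is precisely the step your proposal glosses over. Finally, your proposal does not mention the $k=2$ refinement the paper flags (the quadratic case of Daemen's argument avoids the iterative process and so avoids a $q^{-1/2}$ loss), which is needed to get $t_2 = 2$ rather than something weaker.
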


\begin{proof}
When $k \ge 3$, we follow the argument of Huang \cite[Lemma 1]{huang2015exponential} with $\gamma = \rho^{-1} (t_k-1)^{-1}$. Within that argument, we apply the latest version of Vinogradov's mean-value theorem due to Bourgain, Demeter and Guth \cite{bourgain2015VMT} in place of the earlier version by Wooley \cite{wooley2016cubic} used by Huang. When $k = 2$, we follow the same argument with $\gamma = (2\rho)^{-1}$ but observe that in this case the bound at the top of \cite[p. 512]{huang2015exponential} can be improved to 
\[ \Delta \ll q^{1/2 + \epsilon}(1 + x^2(qQ_0)^{-1})^{1/2} \ll P_0^{1/2 + \epsilon}xy^{-1}. \]
This slight improvement is possible, because in the quadratic case, Daemen's proof of \cite[(3.5)]{daemen2010asymptotic} does not require the iterative process in \cite[p. 78]{daemen2010asymptotic}. Thus, we need not incur a loss of a factor of $q^{-1/2}$ in the above bound which the iterative method causes when $k\ge 3$. 
\end{proof}

\begin{lemma}[Type II sum]\label{type 2}
Let $k \geq 2$ be an integer, let $\rho$ be real, with $0 < \rho \le \min\big( (4t_k)^{-1} , \frac 1{20} \big)$, and suppose that $y = x^{\theta}$, where
\begin{equation}\label{thetarange} 
\frac 3{4 - 4t_k\rho} \leq \theta \leq 1.
\end{equation} 
Suppose also that $\alpha \in \mathfrak m$ and that the coefficients $\xi_u, \eta_v$ satisfy $\xi_u \ll \tau(u)^c$ and $\eta_v \ll \tau(v)^c$. Then 
\begin{equation*}
\sum_{u\sim U}\sum_{uv \in \mathcal I} \xi_u\eta_v e(u^k v^k \alpha) \ll y^{1-\rho +\epsilon} + y^{1+\epsilon}P^{-1/(2k)},
\end{equation*}
provided that 
\begin{equation}\label{type2range} 
xy^{-1+2\rho} \ll U \ll y^{1-2\rho}.
\end{equation} 
\end{lemma}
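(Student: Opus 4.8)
The plan is to treat this as a classical Type II (bilinear) exponential sum estimate and follow Huang's strategy, adapted via Lemma \ref{Weyl bound}. First I would open up the bilinear sum by Cauchy--Schwarz in the $u$ variable, taking the $\eta_v$ sum inside. After squaring, the diagonal terms $v_1 = v_2$ contribute $O(y^{1+\epsilon}/U)$, which under the left-hand constraint \eqref{type2range} is absorbed into $y^{1-2\rho+\epsilon}U^{-1}\cdot U \ll y^{1-\rho+\epsilon}$ once we are careful with the ranges; the off-diagonal terms produce, after reversing orders of summation, an inner sum over $u \sim U$ of the shape $\sum_u e\big((v_1^k - v_2^k)u^k\alpha\big)$ — but more precisely, after the change of variables one is left with a sum over $u$ in an interval of length $\asymp U$ of $e(u^k\beta)$ with $\beta$ running over a family of shifted copies of $\alpha$ scaled by $(v_1^k - v_2^k)$ (or one replaces $u^k$ by its $k$-th difference to linearize). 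Here is where Lemma \ref{Weyl bound} enters: I would apply it to each inner sum over $u$, with $\rho$ in the lemma playing the role of a somewhat larger exponent (call it $\rho_0 \le t_k^{-1}$), so that the contribution of the "Weyl-type" alternative $y^{1-\rho_0+\epsilon}$ can be summed over the $O(y^{1+\epsilon})$ outer variables and still beat $y^{1-\rho+\epsilon}$ with room to spare. The range hypothesis \eqref{thetarange}, $\theta \ge 3/(4-4t_k\rho)$, is exactly what guarantees that when Lemma \ref{Weyl bound} is invoked on the length-$U$ sum (with $U$ as small as $xy^{-1+2\rho}$), its own $\theta$-constraint $\frac{1}{2-t_k\rho_0}\le \theta_U$ is met, where $\theta_U = \log U / \log x$; the factor of $3/4$ versus $1/2$ reflects that $U$ can be as short as roughly $y^{1}x^{0}$'s worth in a variable but the relevant relative length, compared to the modulus scale $x^k$, is shorter.

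The second alternative in Lemma \ref{Weyl bound} — the existence of a good rational approximation $a/q$ to the scaled frequency with $q \le U^{k\rho_0}$ and a contribution $y(q + Ux^{k-1}|q\alpha-a|)^{-1/k}$ — is handled by the minor-arc hypothesis $\alpha \in \mathfrak m$. The point is that a frequency of the form $(v_1^k - v_2^k)\alpha$ having a rational approximation with small denominator forces $\alpha$ itself to have a rational approximation with denominator $q' \ll q |v_1^k - v_2^k| \ll q x^{k-1} P$-ish, which for $\alpha \in \mathfrak m$ is impossible below scale $Q = x^{k-2}y^2P^{-1}$ unless $q'$ is already fairly large, and then the denominator lower bound $q' \gg P$ (from the definition of $\mathfrak m$ in \eqref{major and minor}) feeds back to make $(q + Ux^{k-1}|q\alpha-a|)^{-1/k} \ll P^{-1/(2k)}$, after which summing over the $O(y^{1+\epsilon})$ pairs $(v_1,v_2)$ and taking the square root gives the claimed $y^{1+\epsilon}P^{-1/(2k)}$ term. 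Organizing this Dirichlet-approximation transfer cleanly — in particular tracking how the "spacing" parameter and the $|v_1^k-v_2^k| \ll x^{k-1}U$ factor interact with $Q$ and $P$, and making sure the condition $\rho \le \min((4t_k)^{-1}, 1/20)$ is precisely what one needs so that $U^{k\rho_0}$ stays below $P$ — is the part I expect to require the most care.

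The main obstacle, then, is not any single estimate but the bookkeeping in this Dirichlet-approximation step: one must verify that for every off-diagonal pair $(v_1, v_2)$, the frequency $(v_1^k - v_2^k)\alpha$ genuinely lands on the minor arcs in the relevant sense, uniformly, and that the resulting denominator is large enough to produce the $P^{-1/(2k)}$ saving rather than, say, only $P^{-1/(ck)}$ for a worse constant — or, worse still, a major-arc configuration that the definition of $\mathfrak m$ excludes only for $\alpha$ and not automatically for its dilates. Since this is "merely a slight variation of the main theorem of Huang \cite{huang2015exponential}," I would lean on the structure of Huang's Lemma 2 (the Type II estimate there), substituting our Lemma \ref{Weyl bound} for his Weyl-type input and re-deriving only the inequalities in which the numerical exponents $31/40$, $11/20$, $9/20$ differ from his, so that the $\theta$-threshold \eqref{thetarange} and the range \eqref{type2range} come out as stated.
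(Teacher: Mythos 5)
Your high-level plan matches the paper's: the published proof of Lemma~\ref{type 2} is essentially a pointer to Huang's Proposition~2, with Lemma~\ref{Weyl bound} substituted for Huang's Lemma~1 and one parameter retuned. You correctly identify both the citation and the substitution, and you correctly flag the range conditions \eqref{thetarange}, \eqref{type2range} as the quantities that must be re-derived with the new exponents. In that sense you have found the same proof route.

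However, the sketch you give of \emph{how} the Cauchy--Schwarz argument is carried out has gaps that would bite if you tried to make it precise. First, after Cauchy--Schwarz in $u$ the diagonal terms contribute $(Uy)^{1/2+\epsilon}$ to $|S|$, not $O(y^{1+\epsilon}/U)$, and this is what forces the \emph{upper} constraint $U \ll y^{1-2\rho}$ in \eqref{type2range}; it is not the left-hand constraint that controls the diagonal. Second, a direct application of the first (Weyl-type) alternative of Lemma~\ref{Weyl bound} to the inner $u$-sum, summed over the $\asymp xy/U^{2}$ off-diagonal pairs $(v_1,v_2)$, does \emph{not} by itself deliver $y^{1-\rho+\epsilon}$: the resulting bound $x^{\rho_0}y^{2-\rho_0}U^{-\rho_0}$ falls short of $y^{2-2\rho}$ for $\rho_0 \le t_k^{-1} < 1$ in the admissible $U$-range, so the argument must lean nontrivially on the second (rational-approximation) alternative and on a large-sieve/spacing count over the pairs, not merely on the generic Weyl saving. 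Third, the transfer you propose in the rational-approximation case --- arguing that a good approximation to $(v_1^k-v_2^k)\alpha$ forces one to $\alpha$ with denominator $q' \ll q|v_1^k-v_2^k|$ --- is too lossy as stated: $|v_1^k-v_2^k|$ can be as large as $\asymp x^{k-1}y\,U^{-k}$, so $q'$ typically lands far above $P$ and the minor-arc condition then says nothing. What Huang actually does at this point (and what the paper relies on) is a Farey-dissection/spacing argument in which the critical threshold is a parameter $\nu$; the paper explicitly records that its only change here is to choose $\nu$ so that $Y^{\nu} = y^{2\rho}L^{-1}$ rather than $x^{2\rho}L^{-1}$. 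Your proposal omits this parameter entirely, yet it is one of the two substantive modifications the paper makes. If you intend to re-derive the thresholds rather than simply cite Huang, you would need to incorporate this $\nu$-dichotomy and the associated spacing estimate; the bare "Cauchy--Schwarz, then Weyl, then minor-arc" chain does not close by itself.
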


\begin{proof}
This is a version of \cite[Proposition 2]{huang2015exponential} that applies Lemma \ref{Weyl bound} above in place of   \cite[Lemma~1]{huang2015exponential}. We have also altered slightly the choice of $\nu$ in Huang's argument by choosing it so that $Y^\nu = y^{2\rho}L^{-1}$ as opposed to $Y^\nu = x^{2\rho}L^{-1}$ (see \cite[p. 515]{huang2015exponential}).
\end{proof}

\begin{lemma}[Type I sum]\label{type 1}
Let $k \geq 2$ be an integer, let $\rho$ be real, with $0 < \rho \le \min\big( (4t_k)^{-1} , \frac 1{20} \big)$, and suppose that $y = x^{\theta}$, with $\theta$ in the range \eqref{thetarange}. Suppose also that $\alpha \in \mathfrak m$ and that the coefficients $\xi_u$ satisfy $\xi_u \ll \tau(u)^c$. Then 
\begin{equation*}
\sum_{u\sim U}\sum_{uv \in \mathcal I} \xi_u e(u^k v^k \alpha) \ll y^{1-\rho +\epsilon} + y^{1+\epsilon}P^{-1/(2k)},
\end{equation*}
provided that 
\begin{equation}\label{type1range} 
U \ll y^{1-2\rho}.
\end{equation} 
\end{lemma}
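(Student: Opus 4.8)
The plan is to reduce the Type I sum to the Type II sum already established in Lemma \ref{type 2}. The key structural point is that a Type I sum is a Type II sum in which one of the coefficient sequences is trivial, so a completion/splitting argument should let us reorganize the variables to fit the ranges in \eqref{type2range}. First I would isolate the inner sum over $v$: since $\xi_u \ll \tau(u)^c$ only the outer coefficients are constrained, and the sum over $v$ with $uv \in \mathcal I$ runs over an interval of length $\asymp y/u$. The constraint \eqref{type1range} guarantees $U \ll y^{1-2\rho}$, but unlike in the Type II case we have no lower bound on $U$, so the main obstacle is handling small $U$, in particular the ``genuinely linear'' regime where $u$ is tiny and the sum over $v$ is a long exponential sum over an interval with no auxiliary averaging.

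In that regime I would appeal directly to the Weyl-type estimate of Lemma \ref{Weyl bound}: for each fixed $u \sim U$, the inner sum $\sum_{uv \in \mathcal I} e(u^kv^k\alpha)$ is an exponential sum of the shape treated there, with the polynomial $u^kv^k$ in $v$ and the interval for $v$ of length $\asymp y/u \gg y^{2\rho}$ (using $U \ll y^{1-2\rho}$, which keeps us in the admissible $\theta$-range for that lemma after rescaling). Lemma \ref{Weyl bound} then gives either the desired saving $(y/u)^{1-\rho+\epsilon}$, which sums over $u \sim U$ to $\ll U^{\rho}y^{1-\rho+\epsilon} \ll y^{1-\rho+\epsilon}$ since $U^\rho \ll y^\epsilon$ for $\rho$ small, or else a rational approximation $a/q$ to $u^k\alpha$ with $q$ small and $|qu^k\alpha - a|$ small. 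The second alternative forces $\alpha$ itself to have a good rational approximation with denominator $\ll q u^k \ll y^{k\rho}U^k$, and since $\alpha \in \mathfrak m$ lies on the minor arcs while $P = y^\delta$, for $\rho$ and $\delta$ chosen small enough relative to $k$ this pushes $\alpha$ back onto the major arcs — a contradiction — unless the contribution is already absorbed into the $y^{1+\epsilon}P^{-1/(2k)}$ term. Summing the ``major arc'' contributions over $u$ (they occur for $O(y^\epsilon)$ values of $u$, or can be controlled by a divisor-type count on the possible $q$) yields the bound $y^{1+\epsilon}P^{-1/(2k)}$.

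For the complementary range, when $U$ is not too small — say $U \gg xy^{-1+2\rho}$ — the sum already fits the hypotheses of Lemma \ref{type 2} directly, taking $\eta_v \equiv 1$ (which certainly satisfies $\eta_v \ll \tau(v)^c$), so that lemma gives exactly the claimed bound. Thus the proof splits into the case $U \ll xy^{-1+2\rho}$, handled by Lemma \ref{Weyl bound} applied to the inner variable as above, and the case $xy^{-1+2\rho} \ll U \ll y^{1-2\rho}$, handled by Lemma \ref{type 2}. I expect the bookkeeping in the first case — verifying that the minor-arc condition on $\alpha$ is genuinely violated by the rational approximation produced, with the numerology of $\rho$, $\delta$, $k$, and the range \eqref{thetarange} all lined up — to be the delicate part; everything else is routine splitting into dyadic ranges and summing geometric-type series in $u$. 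This mirrors Huang's treatment of Type I sums in \cite{huang2015exponential}, with Lemma \ref{Weyl bound} substituted for the corresponding Weyl estimate there.
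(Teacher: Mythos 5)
Your overall structure — apply a Weyl-type bound to the inner sum over $v$ when $U$ is small, and fall back to the Type~II estimate (Lemma~\ref{type 2}) with $\eta_v \equiv 1$ when $U \gg xy^{-1+2\rho}$ — does match the paper, which cites Huang's Proposition~1 for the small-$U$ regime and reduces to Lemma~\ref{type 2} when either of the auxiliary inequalities $U \ll x^{-1}y^{2-t_k\rho}$, $U^{2k} \ll x^{k-1}y^{1-2k\rho}$ fails.

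However, there is a genuine error in your treatment of the small-$U$ regime. You apply Lemma~\ref{Weyl bound} with the \emph{same} exponent $\rho$ to each inner sum, getting $(y/u)^{1-\rho+\epsilon}$, and then sum over $u \sim U$ to obtain $\ll U^{\rho}y^{1-\rho+\epsilon}$. Your claim that ``$U^\rho \ll y^\epsilon$ for $\rho$ small'' is false: $\rho$ is a fixed positive constant in the lemma (e.g.\ $\rho = 1/20$ when $k=2$), and $U$ can be a positive power of $y$ (indeed as large as $xy^{-1+2\rho}$ in your case 1), so $U^\rho$ is a fixed positive power of $y$, not $\ll y^\epsilon$. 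With $U \sim y^{1-2\rho}$ you would get $\ll y^{1-2\rho^2+\epsilon}$, which is strictly weaker than the required $y^{1-\rho+\epsilon}$. The paper circumvents this (following Huang) by invoking Lemma~\ref{Weyl bound} with a \emph{different, larger} saving parameter $\nu > \rho$ (chosen so that $Y^\nu = y^{\rho}L^{-1}$, where $Y \asymp y/U$ is the length of the inner interval), so that the trivial summation over $u \sim U$ gives precisely $\asymp U\cdot (y/U)^{1-\nu} \ll y^{1-\rho}L$. The admissibility constraints $\nu \le t_k^{-1}$ and the $\theta$-range condition of Lemma~\ref{Weyl bound} applied to the rescaled sum are then exactly what produce the two inequalities $U \ll x^{-1}y^{2-t_k\rho}$ and $U^{2k} \ll x^{k-1}y^{1-2k\rho}$ that delimit the small-$U$ case. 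Without this re-optimization of the Weyl exponent your argument does not close.

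A secondary issue: your handling of the second alternative of Lemma~\ref{Weyl bound} is too optimistic. A rational approximation to $u^k\alpha$ with denominator $q \le y_0^{k\rho}$ yields an approximation to $\alpha$ with denominator dividing $qu^k$, and $qu^k$ is generically far larger than $P = y^{\delta}$, so there is no direct contradiction with $\alpha \in \mathfrak m$. The real argument must sum the contribution $y_0(q + y_0 x_0^{k-1}|qu^k\alpha - a|)^{-1/k}$ over those $u$ for which the second alternative occurs, using the minor-arc condition on $\alpha$ to control how often $u^k\alpha$ can simultaneously have a good rational approximation with small $q$ as $u$ varies; this is the delicate counting you correctly flag but do not carry out, and simply declaring a ``contradiction'' does not substitute for it.
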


\begin{proof}
This is a version of \cite[Proposition 1]{huang2015exponential}. Following the proof of that result, with our Lemma \ref{Weyl bound} in place of \cite[Lemma 1]{huang2015exponential} and with $\nu$ chosen so that $Y^\nu = y^{\rho}L^{-1}$, one obtains the above bound when 
\[  U \ll x^{-1}y^{2 - t_k\rho}, \qquad U^{2k} \ll x^{k-1}y^{1-2k\rho}.  \]
On the other hand, when either of these inequalities fails, one has $U \gg xy^{-1+2\rho}$ and the result follows from Lemma \ref{type 2}.
\end{proof}

\begin{proof}[Proof of Proposition \ref{proposition: exp sum}]
It suffices to bound $f(\alpha, \Lambda)$, where $\Lambda$ is von Mangoldt's function. Let $\rho = (31t_k)^{-1}$ and $X = xy^{-1+2\rho}$. We note that this choice of $\rho$ ensures that \eqref{thetarange} holds for all $\theta \ge 31/40$ and that $X \le x^{9/40 + (31\rho)/20} \le x^{1/4}$. We may thus apply Vaughan's identity for $\Lambda$ (see \cite[p. 28]{vaughan book}) to decompose $f(\alpha, \Lambda)$ into $O(L)$ type I sums with $U \le X^2$ and $O(L)$ type II sums with $X \le U \le xX^{-1}$. By the choice of $X$ and $\rho$, Lemma \ref{type 2} can be applied to the arising type II sums. Moreover, since $X^2 \le xX^{-1} = y^{1-2\rho}$, Lemma \ref{type 1} can be applied to the type I sums. We conclude that when $\alpha \in \mathfrak m$, one has
\[ f(\alpha, \Lambda) \ll y^{1-\rho+\epsilon} + y^{1-\delta/(2k)+\epsilon}. \]
Since the hypothesis $\delta < 1/(16k)$ ensures that $\delta/(2k) < \rho$, this completes the proof.
\end{proof}

\section{The major arcs}
\label{sec: major arcs}

In this section, we establish \eqref{major arc bound}. First, we need to introduce some notation. We write 
\[ S(q,a)=\sum_{\substack{1\leq h\leq q\\ (h,q)=1}}e(ah^k/q), \quad v(\beta; s) = \int_{\mathcal I} u^{s-1}e(u^k\beta) \, du, \]
and define the singular series $\mathfrak S(n)$ and the singular integral $\mathfrak{I}(n)$ by
\[ \mathfrak{S}(n)=\sum_{q=1}^{\infty} \phi(q)^{-s} \sum_{\substack{1\leq a \leq q\\(a,q)=1}} S(q,a)^s e(-an/q), \quad \mathfrak{I}(n)=\int_{\mathbb R} v(\beta;1)^se(-n\beta) \, d\beta. \]
If $\lambda$ denotes one of the functions $\lambda^\pm$ and $\kappa$ the respective constant $\kappa_\pm$, we define a function $f^*(\alpha,\lambda)$ on the major arcs $\mathfrak{M}$ by setting
\[ f^*(\alpha, \lambda)= \kappa\phi(q)^{-1}S(q,a)v(\beta; 1)L^{-1} \quad \text{if } \alpha \in \mathfrak{M}(q,a). \]
This is the ``major arc approximation'' to $f(\alpha, \lambda)$. We also define a major arc approximation to $f(\alpha, \mathbf 1_{\mathbb P})$ by
\[ f^*(\alpha)= \phi(q)^{-1}S(q,a)v(\beta; 1)L^{-1} \quad \text{if } \alpha \in \mathfrak{M}(q,a). \]
Finally, we adopt the convention that for any arithmetic function $\lambda$, there is an associated Dirichlet polynomial $F(s,\lambda)$, given by
\[ F(s, \lambda) = \sum_{m \sim 2x/3} \lambda(m)m^{-s}. \] 

\subsection{Some technical estimates}

\begin{lemma}\label{W lemma}
Let $x^{11/20} \le y \le x$ and suppose that $P, Q$ satisfy 
\[ PQ \le yx^{k-1}, \quad Q \ge x^{k-9/20}. \]
Suppose also that $g$ is a positive integer, $\nu > 1$, and $\lambda$ is a bounded arithmetic function satisfying hypothesis \emph{(A2)} above. Then
\begin{equation} \label{g r}
\sum_{r\leq P} [g,r]^{-\nu} \sideset{}{^*}\sum_{\chi \bmod r} \bigg( \int_{-1/(rQ)}^{1/(rQ)}|f(\beta, \lambda\chi)|^2 d\beta \bigg)^{1/2} \ll g^{-\nu+\epsilon} y^{1/2}x^{(1-k)/2}L^c.
\end{equation}
\end{lemma}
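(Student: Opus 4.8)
The plan is to reduce the left side of \eqref{g r} to a mean value of $|f(\beta,\lambda\chi)|^2$ that can be estimated via a large sieve / Gallagher-type argument, after exploiting hypothesis (A2) to split $\lambda$ into manageable pieces. First I would dispose of the contribution of the bounded function supported on $\mathbb S = \{p^j : j \ge 2\}$: since that set has $O(x^{1/2})$ elements up to $2x$, the associated $f(\beta,\lambda\chi)$ is trivially $O(x^{1/2})$ pointwise, and the resulting contribution to the left side of \eqref{g r} is easily seen to be acceptable (in fact far smaller than the claimed bound, after summing over $r \le P$ and using $\sum_{r \le P}[g,r]^{-\nu} \ll g^{-\nu+\epsilon}$ for $\nu > 1$). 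So the real work concerns a single triple convolution $\lambda(m) = \sum_{m=uvw,\, u\sim U,\, v\sim V}\xi_u\eta_v\zeta_w$ with $\max(U,V)\ll x^{11/20}$, times $O(L^c)$ such terms.

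Next I would Cauchy--Schwarz in the $\beta$-integral to pass from $|f(\beta,\lambda\chi)|^2$ under a square root to a genuine $L^2$ mean value: writing $\mathcal L = 1/(rQ)$, one has $\bigl(\int_{-\mathcal L}^{\mathcal L}|f(\beta,\lambda\chi)|^2\,d\beta\bigr)^{1/2} \ll \mathcal L^{1/2}\sup_{|\beta|\le \mathcal L}|f(\beta,\lambda\chi)|$ is too lossy, so instead I would keep the integral and bound $\int_{-\mathcal L}^{\mathcal L}|f(\beta,\lambda\chi)|^2\,d\beta$ directly. The standard device (cf. Gallagher's lemma) converts this short-interval-in-$\beta$ integral of the exponential sum into a sum over short intervals of length $\sim rQ/(x^{k-1}y)$ in the $m^k$ variable, i.e. effectively a mean value of the form $\int_0^1 |f(\beta,\lambda\chi)\Delta(\beta)|^2\,d\beta$ for a suitable Fejér-type kernel $\Delta$, or directly a divisor-weighted count of solutions to $|u_1^k v_1^k w_1 - u_2^k v_2^k w_2|$ small. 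Then I would insert the convolution structure: expanding $f(\beta,\lambda\chi) = \sum_{u\sim U}\sum_{v\sim V}\xi_u\eta_v\chi(uv)\sum_w \zeta_w \chi(w) e((uvw)^k\beta)$, and summing the square over $r \le P$ and over primitive $\chi \bmod r$, I would apply the multiplicative large sieve inequality in the $w$-aspect (when $\zeta_w=1$, $w$ ranges over a full interval of length $\gg x^{27/35}/(UV)$... wait — actually in the alternative case $UV \gg x^{27/35}$ and $\zeta_w$ is divisor-bounded, whereas when $\zeta_w = 1$ the variable $w$ is a full interval; in either case the relevant variable has length $\gg x^{1 - 11/20}/(\text{something})$, large enough). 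The large sieve over $r \le P \le y^\delta$ with modulus-squared weights $[g,r]^{-\nu}$ contributes a factor $g^{-\nu+\epsilon}(P^2 + \text{length})$, and the key point is that $P^2 = y^{2\delta}$ is dominated by the length of the long variable since $\delta$ is small.

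The main obstacle I expect is bookkeeping the two cases of (A2) — the case $\zeta_w = 1$ (type I, long smooth variable $w$) versus the case $UV \gg x^{27/35}$ with divisor-bounded $\zeta_w$ (type II) — and in each case arranging the variables so that the large sieve is applied in an aspect where the modular sum has a variable running over an interval (or a well-spaced set) of length comfortably exceeding $P^2 = y^{2\delta}$ and exceeding $rQ \cdot (\text{scaling})$, so that the "diagonal" main term $\gg y x^{1-k}$ dominates. The hypotheses $Q \ge x^{k - 9/20}$ and $PQ \le y x^{k-1}$ are precisely what make the off-diagonal terms in the Gallagher/large-sieve step negligible and the interval lengths line up: $Q$ large forces the short $\beta$-interval to be short enough that the count is essentially diagonal, while $PQ \le yx^{k-1}$ keeps $P$ from being too large relative to the variable lengths. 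I would track the $L^c$ losses from the $O(L^c)$ convolutions in (A2), from divisor bounds $\tau(u)^c,\tau(v)^c,\tau(w)^c$ (handled by Cauchy--Schwarz and $\sum_{n\le x}\tau(n)^C \ll x L^{C'}$), and from the large sieve, all of which are absorbed into the final $L^c$ on the right side of \eqref{g r}.
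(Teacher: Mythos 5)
Your general plan -- Gallagher's lemma to convert the short $\beta$-integral to a short-interval mean value and the multiplicative large sieve over $(r,\chi)$ -- is a second-moment strategy, and it does not match the paper's route (which follows \cite[Lemma 4.5]{kumchev2012li}). More importantly, as sketched it leaves a genuine gap. The left side of \eqref{g r} is a sum of \emph{square roots}, not a square root of a sum. You propose to bound $\sum_{r,\chi}\int|f(\beta,\lambda\chi)|^2\,d\beta$ via Gallagher plus the large sieve, and the best one gets that way (for $r\sim R$, after enlarging the $\beta$-range) is essentially $\sum_{r\sim R}\sum_{\chi}^*\int|f|^2\ll Ry/Q + yx^{1-k}$. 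To convert this into a bound for $\sum_{r,\chi}[g,r]^{-\nu}W_\chi$ you must apply Cauchy--Schwarz in $(r,\chi)$, and the factor $\big(\sum_{r\sim R}[g,r]^{-\nu}\phi^*(r)\big)^{1/2}$ this produces carries a power $R^{1-\nu}$ once the $R^{-\nu/2}$ is balanced against the other half. Summing over dyadic $R\le P$ leaves an overall loss of size $P^{2-\nu}$ whenever $1<\nu<2$. Since the lemma is stated for all $\nu>1$ (and the application in \S\ref{sec: major arcs} uses $\nu\approx 3/2-\epsilon$), and $P=y^\delta$ is a fixed power of $y$, this $P^{2-\nu}$ loss is fatal; it cannot be absorbed into $L^c$.

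What the paper actually does is avoid any Cauchy--Schwarz over $\chi$ by reducing $W_\chi$ to a \emph{first-moment} functional of the Dirichlet polynomial. Using Lemma \ref{lemma Perron} (Perron's formula truncated at $T_1\asymp x^k/(rQ)\ll x^{9/20}/r$) and the first-derivative estimate $v(\beta;1/2+it)\ll\min(yx^{-1/2},|t|^{-1})$, together with Minkowski's integral inequality, one gets roughly
$W_\chi\ll y^{1/2}x^{-k/2}\int_{|t|\le T_1/r}|F(\tfrac12+it,\lambda\chi)|\,dt + \text{(small error)}$.
Then $\sum_{r,\chi}[g,r]^{-\nu}W_\chi$ is bounded by an application of \cite[Theorem 2.1]{choi2006kumchev}, the hybrid mean-value theorem for Dirichlet polynomials with the convolution structure in (A2) -- the same tool used in the proof of Lemma \ref{V lemma}. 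The crucial point is that Choi--Kumchev gives a bound of the form $x + (R^2T/d)x^{11/20}$, and because the truncation height shrinks like $T_1/R$ as $r\sim R$ grows, the sum over dyadic $R$ converges with no positive power of $P$ left over. This is exactly where (A2) earns its keep: it controls the first moment of $F$, not merely the $L^2$-count that the large sieve delivers. Your proposal does invoke (A2), but only to set up a large-sieve input, which does not capture this gain. Finally, a smaller inaccuracy: you suggest applying the large sieve ``in the $w$-aspect'' as if $w$ were always the long variable, but in the case $UV\gg x^{27/35}$ of (A2) the variable $w\ll x^{8/35}$ is short; the two cases need to be handled separately and differently than you indicate.
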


\begin{proof}
When $k = 2$ and $\nu=1-\epsilon$, this is \cite[Lemma 4.5]{kumchev2012li}. The proof for general $k \ge 2$ and $\nu \ge 1$ uses the same argument with some obvious changes: e.g., $T_1 = \Delta x^k$ and $H \ll \Delta^{-1}x^{1-k}$ in place of the respective statements in  \cite[p. 618]{kumchev2012li}. 
\end{proof}

\begin{lemma}\label{lemma Perron}
Let $x$ be a large integer, and suppose that $y, b, T$ are reals with: $y = o(x)$, $\| y \| = 1/2$, $0 < b \le 1$, and $1 \le T \le x^{1/2}$. Suppose also that $\lambda$ is a bounded arithmetic function. Then 
\[
  f(\beta, \lambda) = \frac 1{2\pi i} \int_{b-iT}^{b+iT} F(s,\lambda) v(\beta; s) \, ds + O\big( (1+yx^{k-1}|\beta|)xLT^{-1} \big).
\]
\end{lemma}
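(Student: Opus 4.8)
The plan is to reduce the claimed identity to the classical truncated Perron formula by way of partial summation. Writing $A(u) = \sum_{x - y < m \le u}\lambda(m)$, so that $A(x - y) = 0$ and $A(x+y)$ is precisely the sum defining $f(\beta, \lambda)$, Abel summation gives
\[ f(\beta, \lambda) = A(x+y)\,e\big((x+y)^k\beta\big) - 2\pi i k\beta\int_{x-y}^{x+y} A(u)\,u^{k-1}e(u^k\beta)\,du. \]
Since $y = o(x)$, the interval $\mathcal I$ lies inside $[2x/3, 4x/3)$, so for $u \in \mathcal I$ one may write $A(u) = B(u) - B(x-y)$ with $B(t) = \sum_{m \le t,\ m \sim 2x/3}\lambda(m)$; the condition $\|y\| = 1/2$ guarantees that $x \pm y$ are not integers, so this causes no ambiguity at the endpoints.

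Next I would apply the truncated Perron formula to $B(t)$. As $F(s, \lambda)$ is a finite Dirichlet polynomial there are no convergence issues, and the standard argument --- using $0 < b \le 1$, $|\lambda(m)| \ll 1$ and $1 \le T \le x^{1/2}$ --- yields
\[ B(t) = \mathcal P(t) + O\big( x L T^{-1} \big), \qquad \mathcal P(t) := \frac 1{2\pi i}\int_{b-iT}^{b+iT} F(s, \lambda)\,\frac{t^s}{s}\,ds, \]
uniformly for $t \in \mathcal I$. This is the crux of the proof: one estimates $B(t) - \mathcal P(t) \ll \sum_{m \sim 2x/3}\min\big(1,\,(T|\log(t/m)|)^{-1}\big)$, where the $O(x/T)$ integers $m$ with $|m - t| \ll x/T$ each contribute $O(1)$ and the remaining terms sum to $O(xLT^{-1})$ via a harmonic-series estimate.

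Substituting these expansions into the Abel identity, the error terms will contribute $O(xLT^{-1})$ from the boundary term and, after multiplication by $2\pi i k\beta$ and integration over $\mathcal I$, a further $O\big(yx^{k-1}|\beta| \cdot xLT^{-1}\big)$ --- together the claimed error. For the main term, I would use $\mathcal P'(t) = (2\pi i)^{-1}\int_{b-iT}^{b+iT} F(s, \lambda)\,t^{s-1}\,ds$ (differentiating under the integral) together with Fubini's theorem on the bounded region $[b-iT, b+iT] \times \mathcal I$ to identify $\int_{x-y}^{x+y} e(u^k\beta)\mathcal P'(u)\,du$ with $(2\pi i)^{-1}\int_{b-iT}^{b+iT} F(s, \lambda)\,v(\beta; s)\,ds$. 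Integrating the latter by parts and comparing it with the main term coming from the substitution, a short computation shows that all contributions involving $\mathcal P(x-y)$ cancel --- here one uses the identity $2\pi i k\beta\int_{x-y}^{x+y}u^{k-1}e(u^k\beta)\,du = e((x+y)^k\beta) - e((x-y)^k\beta)$ --- leaving exactly $(2\pi i)^{-1}\int_{b-iT}^{b+iT} F(s, \lambda)\,v(\beta; s)\,ds$, as required. The only genuinely delicate point is the uniform Perron error bound; the rest is bookkeeping and interchanges of finite integrals.
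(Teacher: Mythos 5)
Your argument is correct and in substance the same as the paper's: both reduce the claim to the truncated Perron formula combined with partial (Abel) summation, and both exploit $\|y\|=1/2$ and $T\le x^{1/2}$ in the same way to control the Perron error. You merely reverse the order of the two steps --- applying Abel summation first and Perron to the one-sided sum $B(t)$ afterward, whereas the paper applies Perron directly to $\sum_{x-y<m\le u}\lambda(m)$ (producing the kernel $(u^s-(x-y)^s)/s$) and only then does partial summation --- and you spell out the cancellation of the $\mathcal{P}(x-y)$ boundary terms that the paper compresses into the single sentence ``The conclusion of the lemma then follows by partial summation.''
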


\begin{proof}
For any $u \in \mathcal I$ with $\| u \| = 1/2$, Perron's formula (see \cite[Corollary 5.3]{montgomery book}) gives
\begin{equation}\label{lambda chi mean value}
\sum_{x - y < m \le u} \lambda(m) = \frac{1}{2\pi i}
\int_{b-iT}^{b+iT} F(s, \lambda) \frac{u^s-(x-y)^s}{s} \, ds + O(xLT^{-1}).
\end{equation}
If we change $u$ in \eqref{lambda chi mean value} to $u_1$, where $|u_1 - u| \le 1/2$, the left side will change by $O(1)$ and the integral on the right side will change by $O(T)$. Hence, the integral representation \eqref{lambda chi mean value} can be extended to all $u \in \mathcal I$. The conclusion of the lemma then follows by partial summation.
\end{proof}

\begin{lemma} \label{V lemma}
Under the assumptions of Lemma \ref{W lemma}, we have 
\begin{equation}\label{eq:lemV.1}
\sum_{r \leq P} [g, r]^{-\nu} \sideset{}{^*}\sum_{\chi \bmod r}
 \max_{|\beta|\leq 1/(rQ)}|f(\beta, \lambda\chi)|\ll g^{-\nu+\epsilon}yL^c.
\end{equation}
Furthermore, for any given $A>0$, there is a $B=B(A, \nu)>0$ such that 
\begin{equation}\label{eq:lemV.2}
\sum_{L^B \leq r\leq P}r^{-\nu} \sideset{}{^*}\sum_{\chi \bmod r} \max_{|\beta|\leq 1/(rQ)}|f(\beta, \lambda\chi)| \ll yL^{-A}.
\end{equation}
\end{lemma}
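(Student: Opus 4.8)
The plan is to deduce both estimates from Lemma~\ref{W lemma} (together with the $L^2$ mean-value bound that underlies its proof) by means of a Sobolev-type \emph{completing} inequality, which trades the pointwise maximum of $f(\beta,\lambda\chi)$ over a short interval for an $L^2$-mean over a comparable one. The fact I shall use is the standard one that if $g(\beta)=\sum_n a_n e(n\beta)$ is a trigonometric polynomial all of whose frequencies satisfy $|n|\le N$, then
\[
  \max_{|\beta|\le\Delta}|g(\beta)|^2 \ll N\int_{-2\Delta}^{2\Delta}|g(\beta)|^2\,d\beta \qquad (\Delta\ge N^{-1});
\]
one proves it by covering $[-\Delta,\Delta]$ with $O(N\Delta)$ intervals of length $\asymp N^{-1}$ and applying on each a local $L^\infty$--$L^2$ (Nikolskii-type) inequality, or by combining the one-dimensional Sobolev inequality $\|g\|_{L^\infty(I)}^2\ll|I|^{-1}\|g\|_{L^2(I)}^2+\|g\|_{L^2(I)}\|g'\|_{L^2(I)}$ with Bernstein's inequality. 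The crucial point is that the saving here is governed by the \emph{size} of the largest frequency; so the inequality must be applied not to $f(\beta,\lambda\chi)$ itself---whose frequencies $m^k$ are of size $\asymp x^k$---but to the frequency-centred polynomial
\[
  g_\chi(\beta)=e(-x^k\beta)f(\beta,\lambda\chi)=\sum_{m\in\mathcal I}\lambda(m)\chi(m)\,e\big((m^k-x^k)\beta\big), \qquad |g_\chi(\beta)|=|f(\beta,\lambda\chi)|.
\]

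For $m\in\mathcal I$ one has $m=x+O(y)$ with $y=o(x)$, so the frequencies of $g_\chi$ satisfy $|m^k-x^k|\le N$ for some $N\asymp x^{k-1}y$; since $N\ge yx^{k-1}\ge PQ\ge rQ$, we get $(rQ)^{-1}\ge N^{-1}$ for every $r\le P$, and the completing inequality with $\Delta=(rQ)^{-1}$ yields
\[
  \max_{|\beta|\le 1/(rQ)}|f(\beta,\lambda\chi)|^2 \ll x^{k-1}y\int_{-2/(rQ)}^{2/(rQ)}|f(\beta,\lambda\chi)|^2\,d\beta \qquad (r\le P).
\]
Taking square roots in this bound, weighting by $[g,r]^{-\nu}$, and summing over $r\le P$ and primitive $\chi\bmod r$, the estimate \eqref{eq:lemV.1} falls out at once from Lemma~\ref{W lemma}---applied with $Q$ replaced by $Q/2$, a change its proof clearly permits (and in the situation of interest $Q\asymp x^{k-2}y^2$, so $Q/2\ge x^{k-9/20}$ in any case)---because $(x^{k-1}y)^{1/2}\cdot g^{-\nu+\epsilon}y^{1/2}x^{(1-k)/2}L^c=g^{-\nu+\epsilon}yL^c$.

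For \eqref{eq:lemV.2} I would combine the displayed pointwise bound with the hybrid-large-sieve mean-value estimate
\[
  \sum_{r\le P}\frac{r}{\phi(r)}\sideset{}{^*}\sum_{\chi\bmod r}\int_{-1/(rQ)}^{1/(rQ)}|f(\beta,\lambda\chi)|^2\,d\beta \ll yx^{1-k}L^c,
\]
which is part of the proof of Lemma~\ref{W lemma} (following \cite{kumchev2012li}). Indeed, writing $\phi^*(r)$ for the number of primitive characters modulo $r$ and applying Cauchy--Schwarz with the weight $w_r=r^{\nu+1}/\phi(r)$,
\[
  \sum_{L^B\le r\le P}r^{-\nu}\sideset{}{^*}\sum_{\chi\bmod r}\bigg(\int_{-2/(rQ)}^{2/(rQ)}|f(\beta,\lambda\chi)|^2\bigg)^{1/2} \le \bigg(\sum_{r\ge L^B}\frac{\phi(r)\phi^*(r)}{r^{2\nu+1}}\bigg)^{1/2}\bigg(\sum_{r\le P}\frac r{\phi(r)}\sideset{}{^*}\sum_{\chi\bmod r}\int_{-2/(rQ)}^{2/(rQ)}|f(\beta,\lambda\chi)|^2\bigg)^{1/2}.
\]
Since $\phi(r)\phi^*(r)\le r^2$, the first factor is $\ll\big(\sum_{r\ge L^B}r^{1-2\nu}\big)^{1/2}\ll (L^B)^{1-\nu}$---and this is where the hypothesis $\nu>1$ is used---while the second factor is $\ll(yx^{1-k}L^c)^{1/2}$ by the mean value above (again with $Q\to Q/2$). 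Feeding this and the pointwise bound back in gives
\[
  \sum_{L^B\le r\le P}r^{-\nu}\sideset{}{^*}\sum_{\chi\bmod r}\max_{|\beta|\le 1/(rQ)}|f(\beta,\lambda\chi)| \ll (x^{k-1}y)^{1/2}\cdot(L^B)^{1-\nu}\cdot y^{1/2}x^{(1-k)/2}L^{c/2}=yL^{c/2}(L^B)^{1-\nu},
\]
which is $\ll yL^{-A}$ as soon as $B=B(A,\nu)$ is chosen with $(\nu-1)B\ge\tfrac12 c+A$; this is \eqref{eq:lemV.2}.

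The two steps that require care are the centring---passing from $f(\beta,\lambda\chi)$ to $g_\chi$, without which the factor $x^{k-1}y$ in the pointwise bound would be replaced by $x^k$ and the argument would lose a fatal factor $(x/y)^{1/2}$---and, for \eqref{eq:lemV.2}, extracting the $r/\phi(r)$-weighted mean value from the proof of Lemma~\ref{W lemma} and choosing the Cauchy--Schwarz weight $w_r$ correctly, so that the convergence afforded by $\nu>1$ is converted into an arbitrary power-of-$L$ saving on the tail $r\ge L^B$. I expect the handling of that mean value to be the one place where some genuine work beyond quoting Lemma~\ref{W lemma} is needed.
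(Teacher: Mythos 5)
Your route is genuinely different from the paper's. The paper proves Lemma~\ref{V lemma} via Perron's formula (Lemma~\ref{lemma Perron}): it expresses $f(\beta,\lambda\chi)$ as an integral of $F(it,\lambda\chi)v(\beta;it)$, splits $t$ dyadically, and bounds the resulting $L^1$-averages of the Dirichlet polynomials over $r,\chi,t$ by \cite[Theorem~2.1]{choi2006kumchev}, which is where the structural hypothesis (A2) on $\lambda$ enters; the tail estimate \eqref{eq:lemV.2} then falls out because each dyadic block $r\sim R$ contributes $\ll R^{1-\nu}$. Your proposal instead reduces the pointwise maximum directly to the $L^2$-mean that Lemma~\ref{W lemma} already controls, via a completing (Nikolskii/Sobolev) inequality after recentering the frequencies $m^k\mapsto m^k-x^k$. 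The payoff is a shorter deduction of \eqref{eq:lemV.1} that quotes Lemma~\ref{W lemma} as a black box rather than rerunning the Dirichlet-polynomial machinery, and correctly tracks that $(rQ)^{-1}\geq N^{-1}$ with $N\asymp x^{k-1}y$ for all $r\leq P$ so the dimensional count $(x^{k-1}y)^{1/2}\cdot y^{1/2}x^{(1-k)/2}=y$ closes.

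Two points deserve care. First, the completing inequality $\max_{|\beta|\leq\Delta}|g|^2\ll N\int_{-2\Delta}^{2\Delta}|g|^2$ (for $\hat g$ supported in $[-N,N]$, $\Delta\geq N^{-1}$) is true but not as immediate as your sketch suggests: Bernstein's inequality $\|g'\|_\infty\leq 2\pi N\|g\|_\infty$ is a \emph{global} bound on the circle, and combining it with the local Sobolev embedding $\|g\|_{L^\infty(I)}^2\ll|I|^{-1}\|g\|_{L^2(I)}^2+\|g\|_{L^2(I)}\|g'\|_{L^2(I)}$ controls $\|g'\|_{L^2(I)}$ only through the global $L^2$-norm of $g$, not the local one. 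A clean proof goes via a Schwartz cutoff $\phi_\delta$ with $\widehat{\phi_\delta}$ supported in $[-N,N]$ and $\delta\asymp N^{-1}$: one bounds $|g(\beta_0)|^2\ll N\int|g\phi_\delta(\cdot-\beta_0)|^2$ and then must show the tail contribution from $|\beta-\beta_0|>\Delta$ (where the rapid decay of $\phi_\delta$ competes against the possible size of $g$) is dominated; this is where the assumption $\Delta\geq N^{-1}$ is used. You should state the inequality as a lemma with a correct proof or reference.

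Second, the argument you give for \eqref{eq:lemV.2}---Cauchy--Schwarz against a ``hybrid-large-sieve'' mean value $\sum_{r\leq P}\tfrac{r}{\phi(r)}\sum^*_\chi\int|f|^2\ll yx^{1-k}L^c$---rests on a bound that is nowhere stated in this paper, and it is not clear that it is ``part of the proof of Lemma~\ref{W lemma}'' as you assert (that proof, following \cite{kumchev2012li}, also goes through Dirichlet polynomials rather than a direct hybrid large sieve). Fortunately, your approach does not need it. Once \eqref{eq:lemV.1} is established, fix any $\nu'$ with $1<\nu'<\nu$; then for $r\geq L^B$ one has $r^{-\nu}=r^{-\nu'}r^{\nu'-\nu}\leq r^{-\nu'}L^{B(\nu'-\nu)}$, so
\[
\sum_{L^B\leq r\leq P}r^{-\nu}\sideset{}{^*}\sum_{\chi\bmod r}\max_{|\beta|\leq 1/(rQ)}|f(\beta,\lambda\chi)|\leq L^{B(\nu'-\nu)}\sum_{r\leq P}r^{-\nu'}\sideset{}{^*}\sum_{\chi\bmod r}\max_{|\beta|\leq 1/(rQ)}|f(\beta,\lambda\chi)|\ll yL^{c+B(\nu'-\nu)},
\]
which is $\ll yL^{-A}$ for $B$ large enough. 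This replaces the unproved mean value by an application of \eqref{eq:lemV.1} with $g=1$ and exponent $\nu'$, and matches what the paper does implicitly by restricting the dyadic sum to $R\geq L^B$.
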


\begin{proof}
Let $1 \le R_0 \le P$. By a simple splitting argument, 
\begin{equation}\label{eq:lem6a}
\sum_{R_0 \le r \leq P} [g, r]^{-\nu} \sideset{}{^*}\sum_{\chi \bmod r}
 \max_{|\beta|\leq 1/(rQ)}|f(\beta, \lambda\chi)| \ll (gR)^{-\nu}L\sum_{\substack{d \mid g\\d \le 2R}} d^{\nu} S(R,d),
\end{equation}
where $R_0 \le R \le P$ and 
\[
  S(R,d) = \sum_{\substack{r \sim R\\ d \mid r}} \; \sideset{}{^*}\sum_{\chi \bmod r}
 \max_{|\beta|\leq 1/(RQ)}|f(\beta, \lambda\chi)|.
\]

We now estimate $S(R,d)$. The contribution to $S(R,d)$ from any powers of primes in the support of $\lambda$ can be bounded trivially as $O( yx^{-1/2}(R^2/d) )$. Under the assumptions of the lemma, we have $P \le yx^{-11/20}$, so this contribution can be absorbed into the term $y(R/d)L$ on the right side of \eqref{eq:lem6d} below. Thus, we may assume that $\lambda$ is merely the linear combination of triple convolutions of the kind described in (A2). We may also assume that $x \in \mathbb Z$ and $\|y\| = 1/2$. 

Let $0 < b \le 1$, $|\beta| \le (RQ)^{-1}$, $T_1 = 3k\pi x^kQ^{-1}$, and $T_0 = T_1/R$. Then, by Lemma~\ref{lemma Perron} with $T = T_1$, 
\begin{equation}\label{eq:lem6b}
f(\beta, \lambda\chi) = \frac 1{2\pi i} \int_{b-iT_1}^{b+iT_1} F(s,\lambda\chi) v(\beta; s) \, ds + O(yR^{-1}L).
\end{equation}
Letting $b \downarrow 0$ in \eqref{eq:lem6b}, we obtain
\begin{equation}\label{eq:lem6c}
  f(\beta, \lambda\chi) = \frac 1{2\pi} \int_{-T_1}^{T_1} F(it, \lambda\chi) v(\beta; it) \, dt + O(yR^{-1}L).
\end{equation}
When $|\beta| \le (RQ)^{-1}$ and $|t| \ge T_0$, we have
\[
  v(\beta; it) \ll |t|^{-1},
\]
by the first-derivative test for exponential integrals (see \cite[Lemma 4.5]{titchmarsh book}). Combining this bound with \eqref{eq:lem6c} and the trivial estimate $|v(\beta; it)| \ll yx^{-1}$, we find that 
\[
  f(\beta, \lambda\chi) \ll yx^{-1} \int_{-T_0}^{T_0} |F(it, \lambda\chi)| \, dt + \int_{T_0 \le |t| \le T_1} |F(it, \lambda\chi)| \, \frac {dt}{|t|} + yR^{-1}L.
\]
Summing this inequality over $r$ and $\chi$ and then splitting the range of $t$ in the second integral into dyadic intervals, we deduce that
\begin{equation}\label{eq:lem6d}
  S(R,d) \ll yx^{-1} S_1(R,d; T_0) + \sum_{2^j \le R} (2^jT_0)^{-1}S_1(R,d;2^jT_0) + y(R/d)L,
\end{equation}
where 
\[
  S_1(R,d;T) = \sum_{\substack{r \sim R\\ d \mid r}} \; \sideset{}{^*}\sum_{\chi \bmod r} \int_{-T}^T |F(it, \lambda\chi)| \, dt.
\]
Since $\lambda$ is assumed to be a linear combination of convolutions of the type in (A2), we may apply \cite[Theorem 2.1]{choi2006kumchev} to obtain the bound
\[
  S_1(R,d;T) \ll \big( x + (R^2T/d)x^{11/20} \big)L^c.
\]
Combining this bound, \eqref{eq:lem6a} and \eqref{eq:lem6d}, we conclude that the left side of \eqref{eq:lemV.1} is
\[
  \ll g^{-\nu+\epsilon}y \big( 1 + x^{k-9/20}Q^{-1} + x^{1-k}y^{-1}PQ + Px^{11/20}y^{-1} \big)L^c.
\]
This establishes the first claim of the lemma. 

When $g = 1$, the above argument yields the bound
\[
  \ll yR_0^{1-\nu} \big( 1 + x^{k-9/20}Q^{-1} + x^{1-k}y^{-1}PQ + Px^{11/20}y^{-1} \big)L^c
\]
for the left side of  \eqref{eq:lemV.2}. When $R_0 = L^B$ for a sufficiently large $B > 0$, this establishes the second claim of the lemma.  
\end{proof}

\begin{lemma} \label{main lemma}
Let $x^{11/20+2\epsilon} \le y \le x^{1-\epsilon}$ and suppose that $P,Q$ satisfy
\begin{equation}\label{eq5.9} 
PQ \le yx^{k-1}, \quad Q \ge x^{k-9/20+2\epsilon}. 
\end{equation}
Suppose also that $\nu > 1$ and $\lambda$ is a bounded arithmetic function that satisfies hypotheses \emph{(A0)}, \emph{(A2)} and \emph{(A3)} above. Then, for any given $A>0$,
\begin{equation}\label{eq:lemV.3}
\sum_{r \leq P}r^{-\nu} \sideset{}{^*}\sum_{\chi \bmod r} \max_{|\beta|\leq 1/(rQ)} |f(\beta, \lambda\chi) - \rho_\chi v(\beta;1)| \ll yL^{-A},
\end{equation}
where $\rho_\chi = \delta_\chi\kappa L^{-1}$, $\kappa$ being the constant in hypothesis \emph{(A0)} for $\lambda$.
\end{lemma}

\begin{proof}
By the second part of Lemma \ref{V lemma}, it suffices to show that
\begin{equation}\label{eq:lem7a} 
\max_{|\beta| \le 1/Q} |f(\beta, \lambda\chi) - \rho_\chi v(\beta;1) | \ll yL^{-B-A} 
\end{equation}
for all primitive characters $\chi$ with moduli $r \le L^B$, where $B = B(A, \nu)$ is the number that appears in \eqref{eq:lemV.2}. 
Let $\chi$ be such a character and suppose that $|\beta| \le Q^{-1}$. By Lemma~\ref{lemma Perron} with $b = 1/2$ and $T = T_1 = x^{9/20-\epsilon}$, 
\begin{equation}\label{eq:lem7b}
f(\beta, \lambda\chi) = \frac 1{2\pi i} \int_{1/2-iT_1}^{1/2+iT_1} F(s,\lambda\chi) v(\beta; s) \, ds + O\big( yx^{-\epsilon/2} +  yx^{k-9/20+\epsilon}Q^{-1}L \big).
\end{equation}
Since $v(\beta; 1/2+it) \ll yx^{-1/2}$, we deduce from \eqref{eq:lem7b} and hypothesis (A3) that
\[ f(\beta, \lambda\chi) = \frac{1}{2\pi i} \int_{1/2-iT_0}^{1/2+iT_0} F(s, \lambda\chi)v(\beta; s) \, ds + O(yL^{-B-A}), \]
where $T_0 = \exp(L^{1/3})$. Note that when $\mathrm{Re}(s) = 1/2$,  
\[ v(\beta; s) - x^{s-1}v(\beta; 1) \ll (|s|+1)y^2x^{-3/2}. \]
Hence, 
\begin{equation}\label{eq:lem7c} 
f(\beta, \lambda\chi) = \frac{v(\beta; 1)}{2\pi i}\int_{1/2-iT_0}^{1/2+iT_0} F(s, \lambda\chi)x^{s-1} \, ds + O(yL^{-B-A}). \end{equation}
When $\beta = 0$, we can evaluate the left side of \eqref{eq:lem7c} directly by means of hypothesis (A0). Thus,
\begin{equation}\label{eq:lem7d} 
\frac 1{2\pi i} \int_{1/2-iT_0}^{1/2+iT_0} F(s, \lambda\chi)x^{s-1} \, ds = \rho_\chi + O(L^{-B-A}). 
\end{equation}
The desired inequality \eqref{eq:lem7a} follows from \eqref{eq:lem7c} and \eqref{eq:lem7d}.
\end{proof}

\begin{lemma} \label{lemma KLi}
Let $x^{7/12+2\epsilon} \le y \le x^{1-\epsilon}$ and suppose that $P,Q$ satisfy
\[ PQ \le yx^{k-1}, \quad Q \ge x^{k-5/12+\epsilon}. \]
Suppose also that $\nu > 1$. Then, for any given $A>0$,
\begin{equation}\label{eq:lemV.0}
\sum_{r \leq P}r^{-\nu} \sideset{}{^*}\sum_{\chi \bmod r} \max_{|\beta|\leq 1/(rQ)} |f(\beta, \mathbf 1_{\mathbb P}\chi) - \delta_\chi L^{-1}v(\beta;1)| \ll yL^{-A}.
\end{equation}
\end{lemma}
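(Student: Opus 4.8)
This is the analogue of Lemma~\ref{main lemma} with the sieve weights replaced by the indicator function $\mathbf 1_{\mathbb P}$ of the primes; for $k=2$ it is essentially a result of Kumchev and Li \cite{kumchev2012li}. The plan is to run the proof of Lemma~\ref{main lemma} with $\lambda=\mathbf 1_{\mathbb P}$ (so that $\kappa=1$ and $\rho_\chi=\delta_\chi L^{-1}$), substituting for the sieve‑theoretic hypotheses (A0) and (A3) — which we verified only for $\lambda^\pm$ — their classical counterparts for the primes. Recall that $\mathbf 1_{\mathbb P}$ \emph{does} satisfy hypothesis (A2), as was checked at the end of \S\ref{sec: sieve}. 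Since the present hypotheses imply $y\ge x^{11/20}$ and $Q\ge x^{k-9/20}$, the assumptions of Lemmas~\ref{W lemma} and~\ref{V lemma} hold with $\lambda=\mathbf 1_{\mathbb P}$, and the second part of Lemma~\ref{V lemma} reduces \eqref{eq:lemV.0} to proving
\[ \max_{|\beta|\le 1/Q}\bigl|f(\beta,\mathbf 1_{\mathbb P}\chi)-\delta_\chi L^{-1}v(\beta;1)\bigr|\ll yL^{-B-A} \]
for each primitive character $\chi$ modulo $r\le L^B$, where $B=B(A,\nu)$ is the number appearing in \eqref{eq:lemV.2}.

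For a fixed such $\chi$ and $|\beta|\le Q^{-1}$, I would follow the proof of Lemma~\ref{main lemma} line by line, the only change being the height of the Perron truncation. Apply Lemma~\ref{lemma Perron} with $b=1/2$ and $T=T_1:=x^{5/12-\epsilon/2}$ (in place of the $x^{9/20-\epsilon}$ of Lemma~\ref{main lemma}); using the hypotheses $y\ge x^{7/12+2\epsilon}$ and $Q\ge x^{k-5/12+\epsilon}$ one checks that the resulting error term is $O\bigl(yx^{-\epsilon/4}\bigr)$, which is admissible. Next, bound $v(\beta;1/2+it)\ll yx^{-1/2}$ trivially and truncate the contour integral to $|\mathrm{Im}(s)|\le T_0:=\exp(L^{1/3})$ by means of the analogue of hypothesis (A3) for $\mathbf 1_{\mathbb P}$, valid up to height $T_1$:
\[ \int_{T_0}^{T_1}\biggl|\sum_{m\sim 2x/3}\mathbf 1_{\mathbb P}(m)\chi(m)m^{-1/2-it}\biggr|\,dt\ll x^{1/2}L^{-A}. \]
Replacing $v(\beta;s)$ by $x^{s-1}v(\beta;1)$ on the remaining short segment costs $O\bigl(yL^{-B-A}\bigr)$ exactly as in Lemma~\ref{main lemma} (only the bound $v(\beta;s)-x^{s-1}v(\beta;1)\ll(|s|+1)y^2x^{-3/2}$ on $\mathrm{Re}(s)=1/2$ and the smallness of $T_0$ are used), and evaluating the integral that is left at $\beta=0$ reduces matters to the value of $f(0,\mathbf 1_{\mathbb P}\chi)$, which is furnished by the short‑interval prime number theorem in arithmetic progressions — the analogue of (A0) for $\mathbf 1_{\mathbb P}$ with $\kappa=1$, valid for $q\le L^B$ and $x^{7/12+\epsilon}\le y\le x\exp(-L^{1/3})$, i.e., the extension of Huxley's theorem \cite{huxley1972difference} mentioned in \S\ref{sec: sieve}.

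Both of the displayed inputs are classical: they follow from Huxley's zero‑density estimate $N(\sigma,T,\chi)\ll(rT)^{(12/5)(1-\sigma)+\epsilon}$, together with the standard zero‑free region for $L(s,\chi)$, by the usual large‑values and mean‑value arguments for Dirichlet polynomials. Indeed, the short‑interval prime number theorem in progressions is the classical consequence of those estimates, and the displayed mean‑value bound is precisely the step in its proof where the explicit‑formula sum is cut off at $|\mathrm{Im}(s)|=T_0$; the exponents $\tfrac5{12}=1-\tfrac7{12}$ here play the role played by $\tfrac9{20}=1-\tfrac{11}{20}$ in Lemma~\ref{main lemma}. Consequently there is no genuine analytic obstacle in the argument; the one point that needs care is the exponent bookkeeping — matching the admissible Perron height $T_1$ against the lower bounds $y\ge x^{7/12+2\epsilon}$ and $Q\ge x^{k-5/12+\epsilon}$ so that no power of $x$ is lost — and this is exactly what dictates the form of the hypotheses. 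Finally, the argument is uniform in $k\ge 2$: the only dependence on $k$ enters through the factor $e(u^k\beta)$ in $v(\beta;s)$, which intervenes solely via the trivial bound $v(\beta;1/2+it)\ll yx^{-1/2}$, the first‑derivative‑test bound $v(\beta;it)\ll|t|^{-1}$ already used in Lemma~\ref{V lemma}, and the $k$‑dependent error term of Lemma~\ref{lemma Perron}.
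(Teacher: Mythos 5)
Your proposal reconstructs in detail the argument that the paper delegates in one sentence to Kumchev and Li's Lemma 4.7 (with a modified Perron height $T$): you run the proof of Lemma~\ref{main lemma} with $\lambda=\mathbf 1_{\mathbb P}$, $\kappa=1$, and $T_1\approx x^{5/12}$ in place of $x^{9/20}$, replacing hypotheses (A0) and (A3) by their Huxley-type analogues for the primes. This is essentially the paper's route, and your exponent bookkeeping for the Perron error and for the reduction via Lemma~\ref{V lemma} checks out.
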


\begin{proof}
This is a slight variation of \cite[Lemma 4.7]{kumchev2012li}. We use the same argument, but we alter slightly the choice of $T$ in \cite[p. 620]{kumchev2012li}: instead of $T = (x/y)^2x^{3\epsilon}$, we choose 
\[ T = x^{\epsilon}\max \big( xy^{-1}, x^kQ^{-1} \big), \]
which suffices to complete the proof.
\end{proof}

\subsection{The asymptotic formula for $R_{k,s}(n, \lambda; \mathfrak{M})$}

We have
\begin{equation}\label{eq5.16}
R_{k,s}(n,\lambda;\mathfrak{M}) = \sum_{p_1, \dots, p_t\in \mathcal{I}} \int_{\mathfrak{M}} f(\alpha,\lambda)f(\alpha,\lambda^{+})^{4}e(-n_{\mathbf{p}}\alpha) \, d\alpha,
\end{equation}
where $t=s-5$ and $n_{\mathbf{p}}=n-p_1^k-\dots-p_t^k$. We now proceed to show that, for any fixed $A > 0$, one has 
\begin{equation}\label{error term after replacement} 
\int_{\mathfrak{M}} \big(f(\alpha,\lambda)f(\alpha,\lambda^{+})^{4} - f^*(\alpha,\lambda) f^*(\alpha,\lambda^{+})^{4} \big) e(-n_{\mathbf{p}}\alpha) \, d\alpha \ll y^4x^{1-k}L^{-A}.
\end{equation}

Let $\alpha\in\mathfrak{M}(q,a)$ and write $\beta = \alpha - a/q$. Since $q \le P$, property (A1) ensures that the function $\lambda$ is supported on integers $m$ with $(m,q) = 1$. Hence, by the orthogonality of the characters modulo $q$, we have
\begin{align*}
f(\alpha,\lambda) &= \sum_{\substack{1 \le h \le q\\(h,q) = 1}} e(ah^k/q) \sum_{\substack{ m \in \mathcal I\\ m \equiv h \!\!\!\! \pmod{q}}} \lambda(m)e(m^k\beta)\\
&= \phi(q)^{-1}\sum_{\chi \bmod q}S(\chi,a)f(\beta, \lambda\chi),
\end{align*}
where 
\[ S(\chi, a)=\sum_{h=1}^{q}\bar{\chi}(h)e(ah^k/q). \]
Hence, 
\begin{equation} \label{f f star Delta}
f(\alpha,\lambda)=f^*(\alpha,\lambda)+\Delta(\alpha,\lambda),
\end{equation}
where 
\begin{gather*}
\Delta(\alpha,\lambda)=\phi(q)^{-1}\sum_{\chi \bmod q} S(\chi,a)W(\beta, \lambda\chi), \\
W(\beta, \lambda\chi)=f(\beta, \lambda\chi-\rho_{\chi}), \quad \rho_{\chi}=\delta_\chi \kappa L^{-1}.
\end{gather*}
Using \eqref{f f star Delta}, we can express the integral in \eqref{error term after replacement} as the linear combination of integrals of the form
\begin{equation}\label{general form}
\int_{\mathfrak{M}}f^*(\alpha,\lambda)^a\Delta(\alpha,\lambda)^{1-a} f^*(\alpha,\lambda^{+})^b \Delta(\alpha,\lambda^{+})^{4-b} e(-n_{\mathbf{p}}\alpha) \, d\alpha,
\end{equation}
where $a\in \{0,1\}$, $b\in \{0, 1, \cdots, 4\}$ and $a+b<5$. The estimation of all those integrals follows the same pattern, so we shall focus on the most troublesome among them, namely,
\begin{equation}\label{troublesome term}
\int_{\mathfrak{M}}\Delta(\alpha,\lambda) \Delta(\alpha,\lambda^{+})^{4}e(-n_{\mathbf{p}}\alpha) \, d\alpha.
\end{equation}

We can rewrite \eqref{troublesome term} as the multiple sum
\begin{equation}\label{B J}
\sum_{q\leq P} \sum_{\chi_1 \bmod q}\cdots\sum_{\chi_5\bmod q} B(q; \chi_1, \dots, \chi_5)J(q; \chi_1, \dots, \chi_5),
\end{equation}
where
\begin{gather*}
B(q; \chi_1, \dots, \chi_5)=\phi(q)^{-5}\sum_{\substack{1\leq a\leq q\\ (a,q)=1}} S(\chi_1, a)\cdots S(\chi_5, a)e(-an_{\mathbf{p}}/q), \\
J(q; \chi_1, \dots, \chi_s)=\int_{-1/qQ}^{1/qQ}W(\beta, \lambda\chi_1)W(\beta, \lambda^{+}\chi_2)\cdots W(\beta, \lambda^{+}\chi_5) e(-n_{\mathbf{p}}\beta) \, d\beta.
\end{gather*}
First, we reduce \eqref{B J} to a sum over primitive characters. If $\chi$ is a Dirichlet character modulo $q$ that is induced by a primitive character $\chi^*$ modulo $r$, $r \mid q$, then by property (A1), $\lambda^\pm\chi = \lambda^\pm\chi^*$. Thus, 
\begin{equation} \label{reduced W}
W(\beta, \lambda^\pm\chi)=W(\beta, \lambda^\pm\chi^*).
\end{equation}
Let $\chi_{i}^{*}$ modulo $r_i$, $r_i|q$, be the primitive character inducing $\chi_i$ and set $q_0 = [r_1, \dots, r_5]$. By \eqref{reduced W}, we have
\begin{equation*}
J(q; \chi_1, \dots, \chi_5) = J(q; \chi_1^*, \dots, \chi_5^*).
\end{equation*}
Therefore, the sum \eqref{B J} does not exceed
\[ \sum_{r_1 \le P} \; \sideset{}{^*}\sum_{\chi_1 \bmod r_1} \cdots \sum_{r_5 \le P} \; \sideset{}{^*}\sum_{\chi_5 \bmod r_5} J_0(\chi_1, \dots, \chi_5) B_0(\chi_1, \dots, \chi_5), \]
where  
\begin{gather*} 
B_0(\chi_1, \dots, \chi_5) = \sum_{\substack{q\leq P\\ q_0|q}} |B(q; \chi_1, \dots, \chi_5)|, \\ 
J_0(\chi_1, \dots, \chi_5) = \int_{-1/(q_0Q)}^{1/(q_0Q)}|W(\beta, \lambda\chi_1)W(\beta, \lambda^{+}\chi_2) \cdots W(\beta, \lambda^{+}\chi_5)| \, d\beta.
\end{gather*}
Recalling the bound (see \cite[Lemma 6.1]{wei2014sums})
\[ B_0(\chi_1, \dots, \chi_5)\ll q_0^{-3/2+\epsilon}L^c, \]
we conclude that the sum \eqref{B J} is
\begin{equation}\label{eq5.21}
\ll L^c \sum_{r_1 \le P} \; \sideset{}{^*}\sum_{\chi_1 \bmod r_1} \cdots \sum_{r_5 \le P} \; \sideset{}{^*}\sum_{\chi_5 \bmod r_5} q_0^{-3/2+\epsilon} V(\lambda\chi_1)  V(\lambda^{+}\chi_{2}) V(\lambda^{+}\chi_{3}) W(\lambda^{+}\chi_{4}) W(\lambda^{+}\chi_{5}), 
\end{equation}
where for a character $\chi$ modulo $r$, we write 
\begin{gather*}
V(\lambda\chi)=\max_{|\beta|\leq 1/(rQ)}|W(\beta, \lambda\chi)|, \\
W(\lambda\chi)=\bigg(\int_{-1/(rQ)}^{1/(rQ)}|W(\beta, \lambda\chi)|^2 \, d\beta \bigg)^{1/2}.
\end{gather*}

Next, we proceed to estimate the sum in \eqref{eq5.21} by Lemmas \ref{W lemma}, \ref{V lemma} and \ref{main lemma}, which we will denote by $\Sigma$. When $y = x^{\theta}$ with $\theta > 31/40$ and $\delta \le 2(\theta - 31/40)$, the definitions of $P$ and $Q$ (recall \eqref{P Q}) ensure that they satisfy inequalities \eqref{eq5.9}. Since the sieve functions $\lambda^\pm$ have properties (A0)--(A3), this means that all the hypotheses of the lemmas are in place.

To begin the estimation of $\Sigma$, we note that Lemma \ref{W lemma} yields 
\begin{equation}\label{eq5.23}
\sum_{r \le P} \; \sideset{}{^*}\sum_{\chi \!\!\!\! \mod r} [g, r]^{-\nu}W(\lambda^{+}\chi) \ll g^{-\nu+\epsilon} y^{1/2} x^{(1-k)/2} L^c + g^{-\nu}I_0^{1/2},
\end{equation} 
where  
\begin{equation}\label{I-bound}
\begin{split}
I_0 =\int_{-1/Q}^{1/Q}|v(\beta; 1)|^2 \, d\beta &\ll \iint_{\mathcal I^2} \frac{du_1 du_2}{Q+|u_1^k-u_2^k|} \\
&\ll  yx^{1-k}+yLQ^{-1} \ll yx^{1-k}.
\end{split}
\end{equation}
(We remark that the second term on the right side of \eqref{eq5.23} accounts for the contribution of $\rho_\chi$ to $W(\beta, \lambda\chi)$---which is present only when $r = 1$.) Similarly, the first part of Lemma \ref{V lemma} yields
\begin{equation}\label{eq5.24}
\sum_{r \le P} \; \sideset{}{^*}\sum_{\chi \!\!\!\! \mod r} [g, r]^{-\nu}V(\lambda^{+}\chi) \ll g^{-\nu+\epsilon} yL^c.
\end{equation} 
Applying \eqref{eq5.23} to the summations over $r_5$ and $r_4$ in $\Sigma$ and then \eqref{eq5.24} to the summations over $r_3$ and $r_2$, we obtain
\[ \Sigma \ll y^3x^{1-k}L^c \sum_{r \le P} \; \sideset{}{^*}\sum_{\chi \bmod r} r^{-3/2+5\epsilon} V(\lambda\chi). \]
Finally, we apply Lemma \ref{main lemma} to the last sum and conclude that
\[ \Sigma \ll y^4x^{1-k}L^{-A} \] 
for any fixed $A>0$. This inequality and its variants for other integrals of the form \eqref{general form} establish \eqref{error term after replacement}.

Having established \eqref{error term after replacement}, we can combine it with \eqref{eq5.16} to get
\[ R_{k,s}(n, \lambda; \mathfrak M) = \int_{\mathfrak{M}} f(\alpha,\mathbf 1_{\mathbb P})^t f^*(\alpha, \lambda) f^*(\alpha, \lambda^+)^4 e(-n\alpha) \, d\alpha + O\big( y^{s-1}x^{1-k}L^{-A} \big). \]
We now define a new, slimmer set of major arcs $\mathfrak M_0$, given by \eqref{major and minor} with $Q_0 = x^{k-1}yP^{-1}$ in place of $Q$. From the bound
\[ f^*(\alpha, \lambda^\pm) \ll yq^{-1/2+\epsilon}\big( 1 + yx^{k-1}|\alpha - a/q| \big)^{-1/2} \quad \text{if } \alpha \in \mathfrak{M}(q,a), \]
we find that
\begin{align*} 
\int_{\mathfrak M \setminus \mathfrak{M}_0} \big| f(\alpha,\mathbf 1_{\mathbb P})^t f^*(\alpha, \lambda) f^*(\alpha, \lambda^+)^4 \big| \, d\alpha 
&\ll \sum_{\substack{1 \le a \le q \le P\\(a, q) = 1}} \int_{|\beta| \ge 1/(qQ_0)} \frac {y^sq^{-5/2+\epsilon}}{(1 + yx^{k-1}|\beta|)^{5/2}} \, d\beta \\
&\ll y^{s-1}x^{1-k}P^{-1/2+\epsilon}.
\end{align*}
Hence, for any fixed $A > 0$, we have
\begin{equation}\label{eq5.26} 
R_{k,s}(n, \lambda; \mathfrak M) = \int_{\mathfrak{M}_0} f(\alpha, \mathbf 1_{\mathbb P})^t f^*(\alpha, \lambda) f^*(\alpha, \lambda^+)^4 e(-n\alpha) \, d\alpha + O\big( y^{s-1}x^{1-k}L^{-A} \big). 
\end{equation} 

Finally, we have
\begin{equation}\label{eq5.27} 
\int_{\mathfrak{M}_0} \big( f(\alpha, \mathbf 1_{\mathbb P})^t - f^*(\alpha)^t \big) f^*(\alpha, \lambda) f^*(\alpha, \lambda^+)^4 e(-n\alpha) \, d\alpha \ll y^{s-1}x^{1-k}L^{-A}. 
\end{equation} 
The proof of this inequality is simlar to the proof of \eqref{error term after replacement}, except that we do not need to use Lemma \ref{W lemma} (the bound \eqref{I-bound} can be used instead) and we use Lemma \ref{lemma KLi} instead of Lemma~\ref{main lemma}. We remark that during the process, we need to verify the hypotheses $Q \ge x^{k-9/20}$ and $Q \ge x^{k-5/12+\epsilon}$ of those lemmas for $Q = Q_0$; with our choice of $Q_0$, those hypotheses are satisfied when $y \ge x^{7/12 + \delta}$. 

By \eqref{eq5.26} and \eqref{eq5.27}, we have
\[ R_{k,s}(n, \lambda; \mathfrak M) = \kappa\kappa_+^4 \int_{\mathfrak{M}_0} f^*(\alpha)^s e(-n\alpha) \, d\alpha + O\big( y^{s-1}x^{1-k}L^{-A} \big). \]
The evaluation of the last integral uses standard major arc techniques (e.g., see Wei and Wooley \cite[pp. 1150--1151]{wei2014sums}), so we can omit it and report that
\[ \int_{\mathfrak{M}_0} f^*(\alpha)^s e(-n\alpha) \, d\alpha = \mathfrak S(n)\mathfrak I(n)L^{-s} + O\big( y^{s-1}x^{1-k}P^{-1} \big). \]
We note that $\mathfrak S(n)$ is the standard singular series in the Waring--Goldbach problem for $s$ $k$th powers. In particular, it is known that $1 \ll \mathfrak S(n) \ll 1$ when $n \in \mathcal{H}_{k,s}$. Since the inequality
\[ y^{s-1}x^{1-k} \ll \mathfrak{I}(n) \ll y^{s-1}x^{1-k} \]
is also standard (compare to \cite[(6.5)]{wei2014sums}), we conclude that \eqref{major arc bound} holds with 
\[ \mathfrak C(n) = \mathfrak S(n)\mathfrak I(n)y^{1-s}x^{k-1}. \]

{\bf Acknowledgments.}
The second author would like to thank Professor Jianya Liu for his constant encouragement. He also wants to thank the China Scholarship Council (CSC) for supporting his studies in the United States and the Department of Mathematics at Towson University for the hospitality and the excellent conditions.

\end{document}